\newtheorem{thm}{Theorem}
\newtheorem{lem}{Lemma}
\theoremstyle{definition}
\newtheorem{defn}{Definition}
\newtheorem{rem}{Remark}
\newtheorem{conj}{Conjecture}
\newtheorem{prob}{Problem}
\newtheorem{cor}{Corollary}
\newtheorem{prop}{Proposition}
\renewcommand{\Re}{\mathbb R}
\renewcommand{\S}{\mathbb{S}}
\newcommand{\HH}{\mathbb{H}}
\newcommand{\Z}{\mathbb Z}
\def\bea{\begin{eqnarray}}
\def\eea{\end{eqnarray}}
\DeclareMathOperator{\inter}{int}
\DeclareMathOperator{\conv}{conv}
\DeclareMathOperator{\area}{area}
\begin{document}

\title[On some average properties of convex mosaics]{On some average properties of convex mosaics}
\author[G. Domokos and Z. L\'angi] {G\'abor Domokos and Zsolt L\'angi}
\address{G\'abor Domokos, MTA-BME Morphodynamics Research Group and Dept. of Mechanics, Materials and Structures, Budapest University of Technology,
M\H uegyetem rakpart 1-3., Budapest, Hungary, 1111}
\email{domokos@iit.bme.hu}
\address{Zsolt L\'angi, MTA-BME Morphodynamics Research Group and Dept. of Geometry, Budapest University of Technology,
Egry J\'ozsef utca 1., Budapest, Hungary, 1111}
\email{zlangi@math.bme.hu}
\thanks{The authors are very grateful to Rolf Schneider for his repeated encouragement which contributed to shape this manuscript.
We also thank Egon Schulte  and Frank Morgan for their positive comments, and an anonymous referee for careful reading and valuable comments. Supported by the NKFIH Hungarian Research Fund grant 119245 and of grant BME FIKP-V\'IZ by EMMI is kindly acknowledged.
ZL has been supported by grant UNKP-19-4 New National Excellence Program of the Ministry of Innovation and Technology
and the J\'anos Bolyai Research Scholarship of the Hungarian Academy of Sciences.
}
\subjclass[2010]{ 52C22, 52B11, 52A38}

\keywords{ convex mosaic, uniform mosaic, Platonic solid}

\begin{abstract}
In a convex mosaic in $\Re ^d$ we denote the average number of vertices of a cell by $\bar v$ and
the average number of cells meeting at a node by $\bar n$. Except for the $d=2$ planar case, there is no known formula 
prohibiting points in any range of the $[\bar n, \bar v]$ plane (except for the unphysical $\bar n, \bar v < d+1$ strips).
Nevertheless,  in $d=3$ dimensions if we plot the 28 points corresponding to convex uniform
honeycombs, the 28 points corresponding to their duals and the 3 points corresponding to Poisson-Voronoi, Poisson-Delaunay
and random hyperplane mosaics, then these points appear to accumulate on a narrow strip of the $[\bar n, \bar v]$ plane. To explore this phenomenon
we introduce the harmonic degree $\bar h= \bar n\bar v/(\bar n + \bar v)$ of a $d$-dimensional mosaic. We show that the observed narrow strip
on the $[\bar n, \bar v]$ plane corresponds to a narrow range of $\bar h$. We prove that for every $\bar h^{\star} \in (d, 2^{d-1}]$ there exists a convex mosaic with harmonic degree $\bar h^{\star}$
and we conjecture that there exist no $d$-dimensional mosaic outside this range.
We also show that the harmonic degree has deeper geometric interpretations.
In particular, in case of Euclidean mosaics it is related to the average of the sum of vertex angles and their polars, and in case of 2D mosaics, it is related to the average excess angle. 
\end{abstract}
\maketitle

\tableofcontents

\section{Introduction}\label{sec:intro}

\subsection{Definition and brief history of mosaics}\label{subsec:history}
A $d$-dimensional \emph{mosaic} $\mathcal{M}$ is a  countable system of compact domains in $\Re ^d$, with nonempty interiors, that cover the whole space and have pairwise no common interior points \cite{Schneider}. 
We call a mosaic \emph{convex} if these domains are convex and in this case all domains are convex polytopes \cite[Lemma 10.1.1]{Schneider}. In this paper we deal only with convex mosaics. We call these polytopes the \emph{cells} of the mosaic, the $k$-dimensional faces of the cells, for $k=1,2,\ldots,d-1$, the \emph{faces} of the mosaic, and the vertices of the cells the \emph{nodes} of the mosaic. In particular, in case of $3$-dimensional mosaics, we may use the term face instead of facet of the mosaic.
A cell having $v$ vertices is called a \emph{cell of degree $v$}, and a node  which is the vertex of  $n$ cells is called a \emph{node of degree $n$}. Our prime focus is to determine how average values of these quantities, denoted by $\bar n$ and $\bar v$, respectively, 
depend on each other. 
We remark that for planar regular mosaics, the pair $\{ \bar v, \bar n \}$ is called the Schl\"afli symbol of the mosaic so, by generalizing this concept, we will refer to the $[\bar n, \bar v]$ plane as the \emph{symbolic plane} of convex mosaics.
These, and closely related global averages have been studied before and proved to be  powerful tools in the geometric study of mosaics: in \cite{morgan} the planar isoperimetric problem restricted to convex polygons with $v < 6$ vertices is resolved using these quantities.

Our main focus will be \emph{face-to-face} mosaics, in which any two distinct cells intersect in a common face or have empty intersection.
Unless stated otherwise, any mosaic discussed in our paper will be a convex face-to-face mosaic and we will only discuss non face-to-face mosaics in Subsection \ref{ss:nff}.
Furthermore, we assume that the mosaic is \emph{normal}, that is, for some $0 < r <R$ each cell contains a ball of radius $r$, and is contained in a ball of radius $R$ (see, e.g. \cite{Schulte2}).
This implies, in particular, that the volumes of the cells are bounded from above, and that the mosaic is \emph{locally finite}; that is, each point of space belongs to finitely many cells.
We note that a precise definition of $\bar{v}$ and $\bar{n}$ can be obtained in the usual way, that is, by taking the limit of the average degrees of cells/nodes contained in a large ball whose radius tends to infinity. Here, we always tacitly assume that these limits exist.

Geometric intuition suggests that $\bar v$ and $\bar n$ should have an inverse-type relationship: more polytopes meeting at a
node implies smaller internal angles in the polytopes, which, in turn, suggests a smaller number of vertices for each polytope. To be able to verify this intuition we introduce
\begin{defn}\label{harmonic}
The \emph{harmonic degree} of a mosaic $\mathcal{M}$ is defined as
\begin{equation}\label{q}
 \bar h (\mathcal{M}) = \frac{\bar n (\mathcal{M}) \bar v (\mathcal{M})}{\bar n (\mathcal{M}) + \bar v (\mathcal{M})},
\end{equation} 
where $\bar v (\mathcal{M}), \bar n (\mathcal{M})$ denote the average cell and nodal degrees of $\mathcal{M}$, respectively.
\end{defn}
The variation of the harmonic degree $\bar h$ (computed on an ensemble of mosaics)  may  
serve as a measure of how good our intuition was: a constant value of $\bar h$ describes an exact inverse-type relationship
while small variation of $\bar h$ still indicates that our intuitive approach is, to some extent, justified.
To describe a deeper, geometric meaning of the harmonic degree we introduce
\begin{defn}
Let $\mathcal{M} \in \Re ^d$ be a mosaic, $C$ be a cell of $\mathcal M$ and $p$ be a vertex of $C$. Then
the \emph{total angle}  $\Omega(C,p)$ of the pair $(C,p)$ is the sum of the internal and external angles of $C$ at $p$; the former defined as the  surface area of the spherical convex hull of the unit tangent vectors of the edges of $C$ at $p$, the latter defined as the surface area of the set of the outer unit normal vectors of $C$ at $p$. The \emph{average total angle} $\bar \Omega (\mathcal{M})$ associated with the mosaic $\mathcal{M}$ is defined as the average of  $\Omega(C,p)$, taken over all pairs $(C,p)$ in $\mathcal M$.
\end{defn}
Although $\bar h$ appears to be a combinatorial property and $\bar \Omega $ appears to be a metric property of the mosaic, nevertheless, they are closely linked, which is expressed in
\begin{thm}\label{thm:harmonic}
Let $\mathcal{M}$ be a convex, face-to-face mosaic in $\Re ^d$ and let $S_{d-1}$ denote the surface area of the $d$-dimensional unit sphere. Then we have
\[
\bar h (\mathcal{M}) \bar \Omega (\mathcal{M}) = S_{d-1}.
\]
\end{thm}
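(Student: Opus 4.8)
The plan is to decompose the total angle into its two constituents and to establish a separate summation identity for each: one that sums over the vertices of a fixed cell and one that sums over the cells meeting at a fixed node. Writing $\Omega(C,p) = \Omega_{\mathrm{int}}(C,p) + \Omega_{\mathrm{ext}}(C,p)$ for the internal and external parts, the identity $\bar h(\mathcal M)\,\bar\Omega(\mathcal M) = S_{d-1}$ will then follow from a double-counting of the incident pairs $(C,p)$, grouped once by cell and once by node.

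First I would record the external-angle identity for a single convex polytope, which needs no mosaic structure. The set of outer unit normals of $C$ at a vertex $p$ is the intersection with the unit sphere of the normal cone $N(C,p)$, and as $p$ ranges over the vertices of $C$ these normal cones cover $S^{d-1}$ with pairwise disjoint interiors, since a generic direction $u$ is the outer normal of $C$ at exactly one vertex, namely the one maximizing $\langle\,\cdot\,,u\rangle$ on $C$. Hence
\[
\sum_{p\in V(C)}\Omega_{\mathrm{ext}}(C,p)=S_{d-1}
\]
for every cell $C$. Next comes the internal-angle identity, where the face-to-face and normality hypotheses enter. Fix a node $p$. Because $\mathcal M$ is face-to-face, every cell $C$ containing $p$ has $p$ as a vertex, so $\Omega_{\mathrm{int}}(C,p)$ is the solid angle of the tangent cone of $C$ at $p$. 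Since $\mathcal M$ covers a neighborhood of $p$ and the cells have pairwise disjoint interiors, these finitely many tangent cones tile the space around $p$, and therefore their solid angles add up to the full sphere:
\[
\sum_{C\ni p}\Omega_{\mathrm{int}}(C,p)=S_{d-1}
\]
for every node $p$.

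It remains to combine these two identities. Counting the incident pairs $(C,p)$ whose members lie inside a large ball $B_\rho$ in two ways gives, in the $\rho\to\infty$ limit, $\#\mathrm{pairs}/\#\mathrm{cells}\to\bar v$ and $\#\mathrm{pairs}/\#\mathrm{nodes}\to\bar n$. Summing $\Omega$ over the pairs and grouping by cell (external identity) and by node (internal identity) yields $\sum\Omega=(\#\mathrm{cells}+\#\mathrm{nodes})\,S_{d-1}$, whence
\[
\bar\Omega=\left(\frac{1}{\bar v}+\frac{1}{\bar n}\right)S_{d-1}=\frac{\bar n+\bar v}{\bar n\,\bar v}\,S_{d-1}=\frac{S_{d-1}}{\bar h},
\]
which is exactly the asserted relation.

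The main obstacle I anticipate is the limiting argument rather than the two identities themselves, since the latter are exact only for complete cells and complete nodal stars, whereas $B_\rho$ truncates both. I expect the normality assumption to resolve this: it forces the number of cells, nodes, and incident pairs meeting the boundary sphere $\partial B_\rho$ to grow like $\rho^{d-1}$, which is negligible against the $\rho^{d}$ growth of the respective totals. Consequently the three ratios converge to $\bar v$, $\bar n$, and the correct pair count, and the boundary discrepancies in both groupings vanish in the limit, so that the exact local identities pass cleanly to the global averages.
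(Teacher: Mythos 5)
Your proposal is correct and follows essentially the same route as the paper: the paper likewise observes that the sets $E(C,p)$ over the vertices of a fixed cell, and the sets $I(C,p)$ over the cells at a fixed node, each form a spherical mosaic of $\mathbb{S}^{d-1}$ (so each family sums to $S_{d-1}$), and then performs the same large-ball double count of incident pairs to get $\bar\Omega_I\bar n=\bar\Omega_E\bar v=S_{d-1}$ and hence $\bar h\,\bar\Omega=S_{d-1}$. Your treatment is, if anything, slightly more explicit about where face-to-faceness (every cell containing a node has it as a vertex) and normality (boundary terms of order $\rho^{d-1}$ are negligible) enter.
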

We will prove Theorem \ref{thm:harmonic} in Section \ref{s:p1},  and in Section \ref{qvary}  we extend it to $2$-dimensional spherical mosaics. Since there is no natural definition of average for hyperbolic mosaics (cf. also Subsection \ref{ss:noneupolar}), Theorem~\ref{thm:harmonic} cannot be extended to mosaics in hyperbolic planes.  
While $\Omega(\mathcal{M}) =\pi$ is constant in $d=2$ dimensions for Euclidean mosaics (implying, via Theorem \ref{thm:harmonic},  constant value for the harmonic degree $\bar h$)  however, if $d>2$ then $\Omega(\mathcal{M})$ may vary, so our original intuition appears to become ambiguous for $d>2$ and the variation of $\bar h$ will serve as an indicator of this ambiguity.

In one dimension ($d=1$) we have $v=n=2$ for each cell and vertex and thus, trivially $\bar h=1$ for all mosaics. In two dimensions one can have cells and nodes of various degrees, nonetheless,
it is known \cite[Theorem 10.1.6]{Schneider} that for all convex mosaics $\bar h=2$. 

The situation in $d=3$ dimensions appears, at least at first sight, to be radically different. Schneider and Weil \cite{Schneider} provide the general 
equations governing 3D random mosaics. In Section \ref{s:Schneidermatrix} we present an elementary proof that the same governing equations hold for any convex mosaic under some simple finiteness condition. These equations have three variable parameters. We also show that, beyond the trivial
inequalities $\bar v , \bar n \geq 4$ these formulae do not yield additional constraints on $\bar n, \bar v $ suggesting that in the $[\bar n, \bar v]$ symbolic plane, except for the unphysical domains characterized by $\bar n , \bar v <4$, we might expect to see mosaics \emph{anywhere}. However, this is not the case if we look at the best known mosaics: uniform honeycombs. The latter
are a special class of convex mosaics where cells  are uniform polyhedra
and all nodes are equivalent under the symmetry group of the mosaic. The list of all possible convex uniform honeycombs was completed only recently by Johnson \cite{Johnson} who
described 28 such mosaics (for more details on the 28 uniform honeycombs see \cite{Grunbaum, Deza} and more details on the history see \cite{Senechal}).
To provide the complete list of these 28 honeycombs has been a major result in discrete geometry. If these mosaics were spread out on the $[\bar n, \bar v]$ symbolic plane,
that would certainly imply that the associated values of the harmonic degree $\bar h$ cover a very broad range. However, this is not the case: all values of $\bar h$ are in the range
$3.31 \leq \bar h \leq 4$.  In addition, we also computed the values of $\bar h$ associated with the 28 dual mosaics, hyperplane random mosaics, the Poisson-Voronoi and Poisson-Delaunay random mosaics
and found that for the total of all the 60 mosaics the range is the same (cf. Table \ref{tab:1} in the Appendix). The indicated narrow range for the harmonic degree implies that  on the $[\bar n, \bar v]$ symbolic plane the points corresponding to these mosaics appear to accumulate on a narrow strip (cf. Figure \ref{fig:1}).

While we can not offer a full explanation of this phenomenon, we think that the concept of the harmonic degree may help to explain its essence. In particular, we introduce
\begin{conj}\label{con:1}
For any normal, face-to-face mosaic $\mathcal {M}$ in $\Re ^d$, we have $\bar h (\mathcal{M}) \in (d, 2^{d-1}]$.
\end{conj}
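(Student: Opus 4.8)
The plan is to work through Theorem~\ref{thm:harmonic}. Since that result gives $\bar h=S_{d-1}/\bar\Omega$, the conjecture is equivalent to the two-sided bound $S_{d-1}/2^{d-1}\le\bar\Omega<S_{d-1}/d$ on the average total angle. The first thing I would record is the decomposition already implicit in Theorem~\ref{thm:harmonic}: writing $\Omega(C,p)=\omega_i(C,p)+\omega_e(C,p)$ for the internal and external parts, the outer normal cones at the vertices of a fixed cell tile $\mathbb{S}^{d-1}$, so $\sum_{p}\omega_e(C,p)=S_{d-1}$, while the tangent cones of the cells meeting at a fixed node also tile $\mathbb{S}^{d-1}$, so $\sum_{C\ni p}\omega_i(C,p)=S_{d-1}$. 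Counting the incidences $(C,p)$ in two ways in the density limit gives $\bar\Omega_{\mathrm{ext}}=S_{d-1}/\bar v$ and $\bar\Omega_{\mathrm{int}}=S_{d-1}/\bar n$, so the target reduces to the purely scalar statement
\[
2^{1-d}\ \le\ \frac{1}{\bar n}+\frac{1}{\bar v}\ <\ \frac{1}{d}.
\]
Here the left inequality ($\bar h\le2^{d-1}$) should be sharp for the cubic mosaic, where $\bar n=\bar v=2^d$, and the right inequality ($\bar h>d$) should be approached by simplicial mosaics, where $\bar v\to d+1$.

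For the upper bound I would try to prove it \emph{pointwise}, i.e. to show that for every pointed, full-dimensional tangent cone $K$ with polar cone $K^\circ$,
\[
\Omega(C,p)=\alpha(K)+\alpha(K^\circ)\ \ge\ \frac{S_{d-1}}{2^{d-1}},
\]
with equality exactly when $K$ is a right-angled orthant; averaging this over all pairs $(C,p)$ then yields $\bar\Omega\ge S_{d-1}/2^{d-1}$ at once. In $d=2$ this is the identity $\alpha(K)+\alpha(K^\circ)=\pi$. For $d=3$, passing to the spherical triangle $T=K\cap\mathbb{S}^2$ and its polar triangle, the two solid angles are the areas of $T$ and of its polar, whose sum equals $\pi+\bigl((A+B+C)-(a+b+c)\bigr)$ in the standard angle/side notation; the inequality therefore reduces to the clean claim that the angle sum of a spherical triangle is at least its perimeter, with equality only for the $(\pi/2,\pi/2,\pi/2)$ triangle. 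For general $d$ I would attack $\alpha(K)+\alpha(K^\circ)$ by a symmetrization on the generators of $K$, aiming to show the minimum is attained when the generators are mutually orthogonal.

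The lower bound $\bar h>d$ is the genuinely hard part, and I do not expect any pointwise argument to succeed: a very sharp cone has $\alpha(K)\to0$ and $\alpha(K^\circ)\to S_{d-1}/2$, so a single pair may have $\Omega(C,p)$ close to $S_{d-1}/2>S_{d-1}/d$ (for $d\ge3$), which would violate the required cell-by-cell upper bound on $\bar\Omega$. The inequality must therefore be \emph{global}, expressing that cells with few vertices (hence on average sharp internal angles, i.e. small $\bar v$) are forced to occur in large numbers at each node (large $\bar n$), because their internal angles still have to tile $\mathbb{S}^{d-1}$. Concretely I would seek an upper bound on the average internal angle of a single cell in terms of its number of vertices --- a Gram--Euler type angle-sum estimate of the form $v(C)^{-1}\sum_p\omega_i(C,p)\le g_d(v(C))$ with $g_d$ decreasing --- and feed it into the node identity $\sum_{C\ni p}\omega_i(C,p)=S_{d-1}$ to convert a small $\bar v$ into a large $\bar n$. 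The \textbf{main obstacle} is to make this trade-off quantitative and uniform over all normal face-to-face mosaics: the linear Schneider--Weil relations leave $\bar n,\bar v$ essentially free beyond $\bar n,\bar v\ge d+1$, so a genuinely new lower-bound theorem coupling $\bar n$ and $\bar v$ through the cell geometry is required. This is the step I expect to resist a short proof, and is presumably why the statement is stated only as a conjecture; the boundary behaviour ($\bar h\to d^{+}$ along simplicial mosaics, with the value $d$ attained only in the degenerate cases $d\le2$) is consistent with the right-hand bound being strict and approached but not attained for $d\ge3$.
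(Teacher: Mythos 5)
First, a point of calibration: the paper does not prove this statement at all --- it is Conjecture \ref{con:1} and is left open. What the paper proves are partial results in its direction: Theorem \ref{thm:interval} (every value in $(d,2^{d-1}]$ is \emph{attained} by some normal face-to-face mosaic, i.e.\ the converse of the conjectured containment) and Proposition \ref{prop:h_estimate} (the weak lower bounds $\bar h \geq \frac{d+1}{2}$ and, for $d=3$, $\bar h \geq \frac{28}{13}$). So your proposal must be judged as an attack on an open problem, and you correctly recognize this for the lower bound $\bar h > d$, which you leave open. Your reduction via Theorem \ref{thm:harmonic} to the scalar statement $2^{1-d} \leq \frac{1}{\bar n}+\frac{1}{\bar v} < \frac{1}{d}$ is correct and is exactly the paper's own machinery (equations (\ref{polar2})--(\ref{eq:harmonic_geometric})); your sketched route to the lower bound (a per-cell angle-sum estimate fed into the tiling identity $\sum_{C \ni p} \Omega_I(C,p) = S_{d-1}$ at nodes) is in the same spirit as the paper's proof of Proposition \ref{prop:h_estimate}, which decomposes each cell into at most $2v-7$ tetrahedra (Lemma \ref{lem:tetralas}) and invokes Gaddum's bound on tetrahedral angle sums --- note that this only yields $\frac{28}{13} \approx 2.15$ in $d=3$, far from the conjectured $3$, which gives a sense of how much such arguments currently miss.

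The genuine error is in the half you claim to be able to prove: the pointwise inequality $\alpha(K)+\alpha(K^{\circ}) \geq S_{d-1}/2^{d-1}$ for all pointed full-dimensional cones $K$ is \emph{false} for $d \geq 3$, so your proof of the upper bound $\bar h \leq 2^{d-1}$ collapses. Your own reformulation in $d=3$ exposes it: the claim is equivalent to ``angle sum $\geq$ perimeter'' for spherical triangles, and this fails. Take the sliver triangle with vertices $p_1=(1,0,0)$, $p_2=(\cos\theta,\sin\theta,0)$ with $\frac{\pi}{2}<\theta<\pi$, and $p_3$ an $\varepsilon$-perturbation of the arc midpoint off the equator: as $\varepsilon \to 0$ the area tends to $0$, so by Girard the angle sum tends to $\pi$, while the perimeter tends to $2\theta > \pi$. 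Equivalently $\alpha(K) \to 0$ while $\alpha(K^{\circ}) = 2\pi - \mathrm{perimeter} \to 2\pi - 2\theta < \pi$, so the sum drops below $S_2/4 = \pi$; letting $\theta \to \pi$ even shows $\inf_K\left(\alpha(K)+\alpha(K^{\circ})\right) = 0$, since a cone collapsing to a line has a polar collapsing to a plane, and both have vanishing solid angle. (Your equality characterization is also off: the isosceles triangle with angles $\frac{\pi}{2},\frac{\pi}{2},\delta$ and sides $\frac{\pi}{2},\frac{\pi}{2},\delta$ gives equality for every $\delta$, not only the orthant.) Such nearly-flat vertex cones do occur in normal mosaics --- normality constrains the inradius and circumradius of each cell, not the shape of individual vertex cones; a cube with a shallow roof erected over one face already has two such vertices --- so no pointwise bound can work. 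Like the lower bound, the upper bound must be a genuinely global statement about how the cones of a mosaic fit together, and both halves of the conjecture remain open.
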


Or in a more detailed form:

\begin{prob}
Let $S_d$ denote the set of points of the $[\bar{n}, \bar{v} ]$ symbolic plane corresponding to all normal, face-to-face mosaics in $\Re^d$.
Prove or disprove that $S_d$ is bounded for all integers $d \geq 2$. More specifically, what are the sets $S_d$ for small values of $d$?
\end{prob}

\noindent To build intuitive support for Conjecture \ref{con:1} we will show in Section \ref{s:p1} that the interval indicated in the Conjecture has indeed some significance: we demonstrate mosaics 
corresponding to the lower and upper endpoints (the former understood as a limit outside the interval) and we also prove
\begin{thm}\label{thm:interval}
For all $\bar h ^{\star} \in (d, 2^{d-1}]$, there is a normal, face-to-face mosaic $\mathcal {M}$ in $\Re ^d$ satisfying $\bar h (\mathcal{M})=\bar h^{\star}$.
\end{thm}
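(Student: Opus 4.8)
The plan is to realize the entire interval from a single, mutually compatible family obtained by iterated \emph{coning} of the cubic honeycomb, so that no delicate gluing of genuinely different tilings is ever required. The upper endpoint is immediate: the cubic honeycomb in $\Re^d$ has $\bar v=\bar n=2^d$, hence $\bar h=2^{d-1}$, and this value is attained. For all the bookkeeping I would use the identity, obtained by double counting cell--node incidences,
\[
\bar h(\mathcal{M})=\frac{I}{F_{\mathrm{cell}}+F_{\mathrm{node}}},
\]
where $F_{\mathrm{cell}},F_{\mathrm{node}}$ are the per-unit-volume densities of cells and nodes and $I$ is the density of incident (cell, node) pairs (indeed $I=\bar v\,F_{\mathrm{cell}}=\bar n\,F_{\mathrm{node}}$).

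The key device is coning a cell from its centroid, which replaces it by the pyramids erected over its facets and, crucially, leaves every facet and every lower-dimensional face unsubdivided; the only new node is the interior centroid. Let $\mathcal{C}_k$ be the mosaic obtained from the cubic honeycomb by coning every cell $k$ times. Since coning preserves the boundary of each cube, \emph{any} assignment of coning depths to the cubes yields a face-to-face complex: two cubes of different depth still meet along their common, unsubdivided $(d-1)$-cube facet, and the only boundary nodes are the original lattice vertices. This built-in compatibility is exactly what eliminates the cube-versus-simplex interface problem that would otherwise obstruct the construction for $d\ge 4$ (where simplicial mosaics cannot exceed $\bar h=d+1<2^{d-1}$).

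To locate the lower endpoint I would track cell types under coning. Each $d$-cell of $\mathcal{C}_k$ is a $j$-fold pyramid over an $\ell$-cube with $\ell+j=d$, and coning such a cell produces cells whose ``cube-dimension'' $\ell$ never increases. Grading by $\ell$ makes the type-substitution matrix triangular; a short count of the cube-dimension-$\ell$ facets shows the diagonal (self-reproduction) rates are $d+1$ for the genuine simplices ($\ell=0$) and $d-\ell$ for $\ell\ge 1$. Hence the Perron root is $d+1$, attained only by the simplex type, so simplices asymptotically dominate: $F_{\mathrm{cell}}\sim(d+1)^k$, the fraction of non-simplicial cells tends to $0$, and $\bar v\to d+1$. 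Combined with $F_{\mathrm{node}}'=F_{\mathrm{node}}+F_{\mathrm{cell}}$, which forces $F_{\mathrm{node}}\sim F_{\mathrm{cell}}/d$, the identity above gives $\bar h(\mathcal{C}_k)\to d$ strictly from above.

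Finally, to fill the interval I would fix $k$ and cone a subset of the cubes of prescribed asymptotic density $\lambda\in[0,1]$, realizing every real $\lambda$ by a Sturmian/Beatty pattern. Because every cell lies in a single cube and all boundary nodes are original lattice vertices, the quantities $I(\lambda)$, $F_{\mathrm{cell}}(\lambda)$, $F_{\mathrm{node}}(\lambda)$ depend on $\lambda$ alone and are exactly affine in it; thus $\bar h(\lambda)$ is a fractional-linear, hence continuous and monotone, function joining $\bar h(\mathcal{C}_k)$ at $\lambda=1$ to $2^{d-1}$ at $\lambda=0$. Every value of $[\bar h(\mathcal{C}_k),2^{d-1}]$ is therefore attained, and since $\bar h(\mathcal{C}_k)\downarrow d$ the union of these intervals is precisely $(d,2^{d-1}]$. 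I expect the genuine difficulty to lie entirely in the third paragraph: making the eigenvalue/triangularity argument rigorous, i.e. proving that the simplicial cells dominate and that $F_{\mathrm{node}}\sim F_{\mathrm{cell}}/d$, with full quantitative control of the vanishing non-simplicial contribution; the endpoint and the mixing steps are comparatively routine.
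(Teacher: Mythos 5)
Your proposal is correct and takes a genuinely different route from the paper's proof, although both share the same skeleton: the cubic mosaic for the endpoint $2^{d-1}$, iterated coning to drive $\bar h$ down toward $d$, and a mixing step to fill the interval. The paper first replaces the cubic mosaic by its barycentric subdivision $\mathcal{M}'$ (a flag count gives $\bar n(\mathcal{M}')=(d+1)!$, hence $\bar h'=\frac{(d+1)!}{1+d!}\in[d,d+1)$); since $\mathcal{M}'$ is simplicial, iterated coning keeps every cell a simplex, so $\bar v\equiv d+1$ and no domination argument is ever needed --- but because barycentric subdivision subdivides the cube facets, the paper must interpolate between the cubic mosaic and $\mathcal{M}'$ by building strips out of four layer types, two of which are ad hoc transition layers, and in the low range it partially cones the simplicial mosaic. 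Your construction makes the opposite trade: coning cubes directly never creates boundary nodes, so mixed coning depths are automatically face-to-face (this single observation replaces the paper's transition-layer machinery, and it is verified by an easy induction), but your cells pass through non-simplicial pyramid types $\mathrm{pyr}^{j}(Q_\ell)$, so you owe the substitution-matrix argument that simplices dominate. That argument does go through: coning $\mathrm{pyr}^{j}(Q_\ell)$ yields $j=d-\ell$ cells of the same type and $2\ell$ cells of type $\mathrm{pyr}^{j+1}(Q_{\ell-1})$, so $\ell$ never increases; your only slip is that the $\ell=1$ types are already simplices and hence belong to the rate-$(d+1)$ class rather than the rate-$(d-\ell)$ class --- grouping all simplices together, the diagonal rates are $d+1$ versus at most $d-2$, and the Perron conclusion ($\bar v_k\to d+1$, $F_{\mathrm{node}}\sim F_{\mathrm{cell}}/d$, $\bar h(\mathcal{C}_k)\to d$) follows as you say. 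One further point in your favor: the harmonic degree of a mixture is indeed fractional-linear, not affine, in the mixing density, since $\bar h=I/(F_{\mathrm{cell}}+F_{\mathrm{node}})$ with all three densities affine; your write-up gets this right, whereas the paper's strip argument asserts the arithmetic convex combination $\frac{k}{k+l}\bar h+\frac{l}{k+l}\bar h'$, which is only approximate, so strictly speaking it too needs your intermediate-value reading of the interpolation to pin down the exact value $\bar h^{\star}$.
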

\noindent Also, as a small step towards establishing the Conjecture, we prove
\begin{prop}\label{prop:h_estimate}
For any normal, face-to-face mosaic $\mathcal{M}$ in $\Re^d$, we have $ \bar{h}(\mathcal{M}) \geq \frac{d+1}{2}$.
Furthermore, if $d=3$, then  $ \bar{h}(\mathcal{M}) \geq \frac{28}{13} $.
\end{prop}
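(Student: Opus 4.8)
The first inequality is purely combinatorial. Every cell of a $d$-dimensional mosaic is a $d$-polytope and hence has at least $d+1$ vertices, so $\bar v(\mathcal M)\ge d+1$; dually, the tangent cones at a node partition $\Re^d$ into convex cones and at least $d+1$ of them are needed, so $\bar n(\mathcal M)\ge d+1$. Writing Definition \ref{harmonic} in the reciprocal form $\frac{1}{\bar h}=\frac{1}{\bar n}+\frac{1}{\bar v}$ and inserting both bounds gives $\frac{1}{\bar h}\le\frac{2}{d+1}$, i.e. $\bar h\ge\frac{d+1}{2}$. For the sharper three-dimensional estimate the plan is to show that $\bar n$ and $\bar v$ cannot simultaneously be close to their common minimum $4$, and to quantify this.

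For $d=3$ I would use the intensities $N_0,N_1,N_2,N_3$ of nodes, edges, faces and cells together with the incidence identities of Section \ref{s:Schneidermatrix}, which (via Euler's relation) leave three free parameters. Writing $I_{03}=\bar n N_0=\bar v N_3$ for the node--cell incidence density, these give the exact identity $\bar h=\frac{I_{03}}{N_0+N_3}$. The metric ingredient, which is the content of Theorem \ref{thm:harmonic} specialised to $d=3$, is the \emph{solid-angle budget}: the interior solid angles of the cells meeting at a node sum to $S_2=4\pi$, so that $N_0=\frac{1}{4\pi}\sum_C W(C)$, where the sum runs over the cells per unit volume and $W(C)$ is the sum of the interior solid angles of $C$ at its vertices. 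Since $N_3=\sum_C 1$ and $I_{03}=\sum_C f_0(C)$ with $f_0(C)$ the number of vertices of $C$, this turns $\bar h$ into the ratio $\bigl(\sum_C f_0(C)\bigr)\big/\bigl(\sum_C(1+W(C)/4\pi)\bigr)$; thus lowering $\bar h$ forces cells with few vertices but large total solid angle.

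The input that lifts the estimate above the combinatorial floor $\bar h=2$ is that small cells cannot carry much solid angle. For a simplex one has $W(C)<2\pi$ (equivalently, the sum of the six dihedral angles of a tetrahedron is less than $3\pi$). Already this settles the corner of the diagram cleanly: if every cell is a tetrahedron then $4\pi N_0=\sum_C W(C)<2\pi N_3$, so $N_0<N_3/2$ and $\bar n=4N_3/N_0>8$, which excludes $(\bar n,\bar v)=(4,4)$ and in fact forces $\bar h>\tfrac{8}{3}$ in this case. To obtain a bound valid for \emph{all} mosaics I would classify the cells by their number of vertices, bound $W(C)$ on each class, substitute these bounds into the identity for $\bar h$ together with the relations among the $N_i$ and the inequalities $\bar v\ge4$, $\bar n\ge4$ and $\bar c_e\ge3$ (where $\bar c_e=I_{13}/N_1$ is the mean number of cells around an edge), and minimise $\bar h$ over the resulting three-parameter linear program, which should bottom out at $\frac{28}{13}$.

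The main obstacle is that there is no useful \emph{universal} per-cell inequality relating $W(C)$ to $f_0(C)$: for cells with many vertices (nearly round cells whose dihedral angles approach $\pi$) the solid-angle bound degenerates to the trivial $W(C)<2\pi f_0(C)$, and relying on it alone would push the estimate even below $2$. The real work is therefore to show that such cells cannot produce a small $\bar h$, because the solid angle they need in order to fill space around the nodes --- when it cannot be supplied by simplices --- must be paid for by extra nodes, an effect that is controlled precisely by $\bar c_e\ge3$ and $\bar n\ge4$. Pushing a crude version of this book-keeping through already yields a bound strictly between $2$ and $\tfrac{8}{3}$; extracting the exact constant $\frac{28}{13}$ requires the sharp solid-angle bounds for cells with few vertices and a careful solution of the optimisation, and this is the delicate step of the proof.
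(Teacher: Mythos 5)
Your first inequality is exactly the paper's argument: $\bar v,\bar n\ge d+1$ plus the reciprocal form of Definition~\ref{harmonic} give $\bar h\ge\frac{d+1}{2}$, and that part is complete. Your framework for $d=3$ is also the paper's starting point: the solid-angle budget $4\pi N_0=\sum_C W(C)$ at the nodes combined with Gaddum's theorem for tetrahedra, and your computation in the all-tetrahedra case ($\bar n>8$, hence $\bar h>\tfrac83$) is correct. But the proposal stops short of a proof at precisely the step you yourself flag as ``the delicate step'': you never produce the inequality that closes the book-keeping, the claimed ``crude version'' yielding a bound above $2$ is not carried out, and the route you sketch instead (classifying cells by vertex number, sharp solid-angle bounds per class, a three-parameter linear program involving $\bar c_e\ge 3$) is both unexecuted and unnecessary. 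Worse, the obstacle on which you base this detour --- the assertion that there is no useful \emph{universal} per-cell inequality relating $W(C)$ to the number of vertices --- is false, and that inequality is the entire content of the paper's proof.

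The missing idea is Lemma~\ref{lem:tetralas}: every convex polyhedron with $v$ vertices can be decomposed into at most $2v-7$ tetrahedra (triangulate all faces, which produces $2v-4$ triangles; at least $3$ of them lie in faces containing a chosen vertex $p$; coning $p$ over the remaining at most $2v-7$ triangles gives the decomposition). Since every vertex of a cell $C$ is an extreme point, each vertex solid angle of $C$ splits among tetrahedra of the decomposition having it as a vertex, so Gaddum's bound gives the universal estimate $W(C)<2\pi(2v-7)$ for every convex cell. Your worry about many-vertex cells is a red herring: for $v\ge 7$ this bound is weaker than the trivial $W(C)<2\pi v$, but it is still valid, and after averaging what matters is the negative constant $-14\pi$ contributed by \emph{each} cell, not the slope. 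Feeding it into the budget yields $4\pi N_v\le 4\pi N_c\bar v-14\pi N_c$, i.e.
\[
\bar n\ \ge\ \frac{2\bar v}{2\bar v-7},
\]
and the rest is a two-variable optimization over the region $\bar v\ge 4$, $\bar n\ge\max\bigl\{4,\tfrac{2\bar v}{2\bar v-7}\bigr\}$: for $\bar v\ge\tfrac{14}{3}$ the minimum of $\bar h$ is attained at $(\bar n,\bar v)=(4,\tfrac{14}{3})$, while for $4\le\bar v\le\tfrac{14}{3}$ one gets $\bar h=\tfrac{2\bar v}{2\bar v-5}$, minimal again at $\bar v=\tfrac{14}{3}$; both cases give $\tfrac{28}{13}$. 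No edge-degree inequality and no classification of cells is needed.
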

We provide the general formulae governing 3D mosaics in Section \ref{s:Schneidermatrix}. Next, we prove Theorems \ref{thm:harmonic} and \ref{thm:interval} in Section \ref{s:p1}.  Section \ref{qvary} discusses  non-Euclidean mosaics  and non-face-to-face mosaics in $d=2$ and $d=3$ dimensions. In Section \ref{s:sum} we draw conclusions.

\begin{figure}[ht]
\begin{center}
\includegraphics[width=\textwidth]{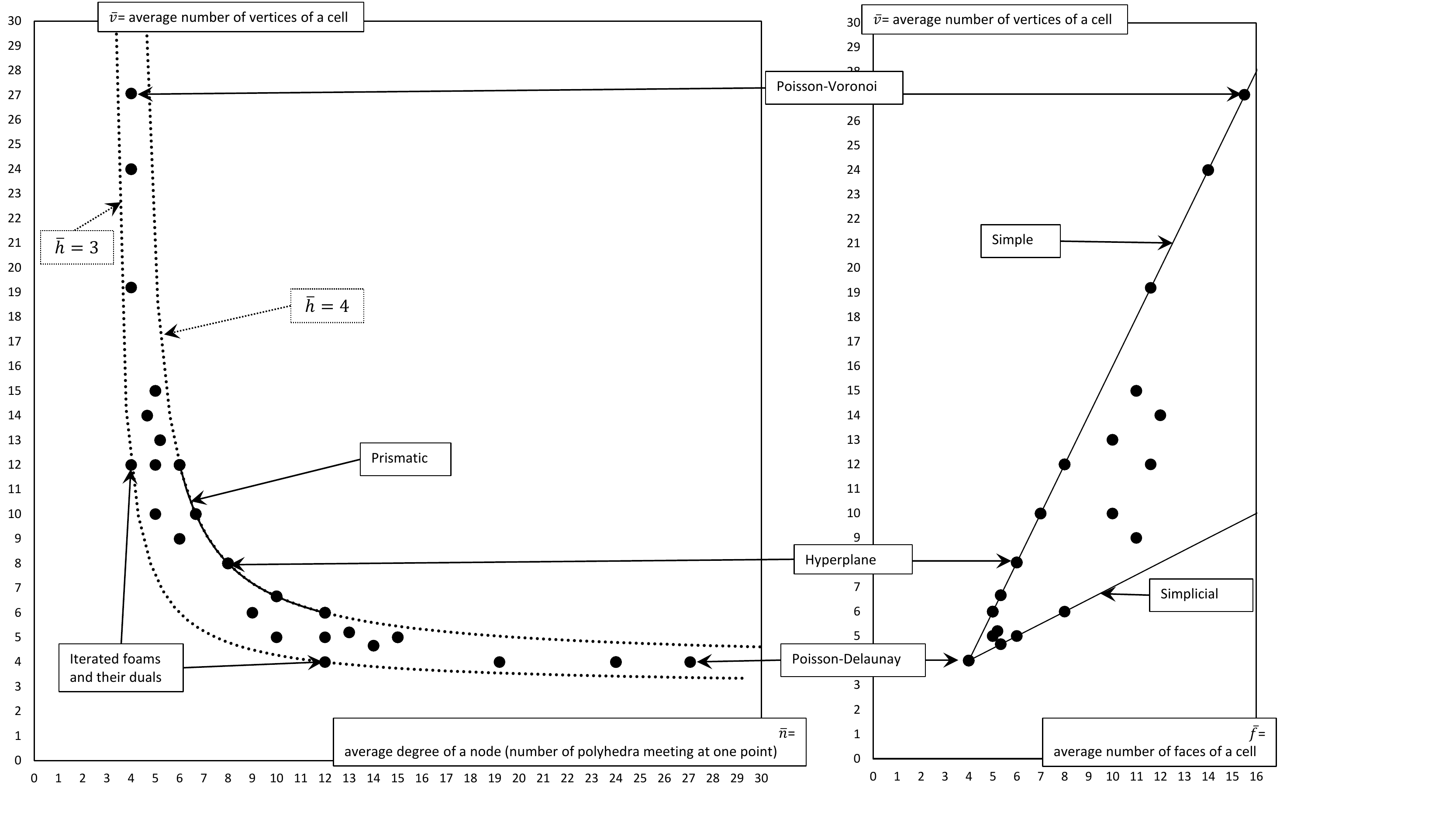}
\caption{The 28 uniform honeycombs, their duals, the hyperplane mosaics, the Poisson-Voronoi and Poisson-Delaunay mosaics, iterated foams and their duals (for details on the latter see Section \ref{ss:foam}) shown as black dots on the symbolic plane $[\bar n, \bar v]$ (left) and on the plane $[\bar f,\bar v]$, where $\bar{f}$ denotes the average number of faces of a cell of the mosaic (right).   For detailed numerical data see Table \ref{tab:1} in the Appendix. Continuous curve on the left panel shows prismatic mosaics.  Dotted lines represent the $\bar h =3$ and $\bar h=4$ curves, illustrating Conjecture \ref{con:1}. Continuous straight lines on the right panel correspond to simple and simplicial polyhedra.}\label{fig:1}
\end{center}
\end{figure}

\section{General formulae defining 3D mosaics}\label{s:Schneidermatrix}

In \cite{Schneider}, for any $0 \leq i,j \leq d$ and for any random mosaic $\mathcal{M}$ in the $d$-dimensional Euclidean space $\Re^d$, the quantity $n_{ij}$ is defined as the number of $j$-faces of a typical $i$-face of $\mathcal{M}$ if $j \leq i$, and as the number of $j$-faces containing a typical $i$-face of $\mathcal{M}$ if $j > i$.
Relations between these quantities are described in \cite[Theorem 10.1.6]{Schneider} for the case $d=2$, and in 
\cite[Theorem 10.1.7]{Schneider} for the case $d=3$.
Here we use elementary, combinatorial arguments to show that these relations hold for any convex mosaic in $\Re^3$.

 Throughout this section, let $\mathcal{M}$ be a convex, face-to-face, normal mosaic in $\Re^d$.
Then we may define $n_{ij}(\mathcal{M})$ as the \emph{average} number of $j$-faces contained in or containing a given $i$-face, if $j \leq i$ or $j > i$, respectively. If it is clear which convex mosaic $\mathcal{M}$ we refer to, for brevity we may use the notation $n_{ij}=n_{ij}(\mathcal{M})$.
Here we assume that the average of any $n_{ij}$, for all values of $i$ and $j$ exists.

\begin{thm}\label{thm:nij}
For any convex mosaic $\mathcal{M}$ in $\Re^3$ satisfying the conditions in the previous paragraph, we have
\begin{equation}\label{eq:nij}
[n_{ij}(\mathcal{M})]=
\left[
\begin{array}{cccc}
1 & \frac{(f-2) n}{v} + 2 & \frac{(f-2) n}{v} + n & n \\
2 & 1 & \frac{2(v+f-2)n}{(f-2)n + 2v} & \frac{2(v+f-2)n}{(f-2)n + 2v}\\
\frac{2(v-2)}{f} + 2 & \frac{2(v-2)}{f} + 2 & 1 & 2\\
v & v+f-2 & f & 1
\end{array}
\right],
\end{equation}
where $v=n_{30}$, $f=n_{32}$ and $n=n_{03}$.
\end{thm}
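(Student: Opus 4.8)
The plan is to pass to the \emph{face intensities} $m_i$, $i=0,1,2,3$, defined as the limits of $N_i(\rho)/\mathrm{vol}(B_\rho)$, where $N_i(\rho)$ counts the $i$-faces of $\mathcal{M}$ meeting a ball $B_\rho$ of radius $\rho$. Normality forces the cells to have uniformly bounded size, so the $i$-faces cut by $\partial B_\rho$ number only $O(\rho^2)$, negligible against $\mathrm{vol}(B_\rho)\sim\rho^3$; this makes the limits exist and insensitive to the boundary convention. The engine of the proof is the elementary \emph{flag-counting identity}: for $i\neq j$ the incident pairs $(\sigma,\tau)$ with $\sigma$ an $i$-face, $\tau$ a $j$-face, one contained in the other, can be tallied by summing over the $i$-faces or over the $j$-faces, so in the limit
\[
m_i\, n_{ij} = m_j\, n_{ji}, \qquad 0\le i,j\le 3.
\]

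First I would record the immediate entries. The diagonal $n_{ii}=1$ is trivial, and the defining quantities give $n_{30}=v$, $n_{32}=f$, $n_{03}=n$. A block of purely local facts follows: every edge has two endpoints, so $n_{10}=2$; in a face-to-face mosaic every facet lies in the boundary of exactly two cells, so $n_{23}=2$; each facet is a polygon, so its numbers of vertices and edges coincide, giving $n_{20}=n_{21}$; Euler's relation on a typical cell, averaged, reads $n_{30}-n_{31}+n_{32}=2$, so $n_{31}=v+f-2$; and since the cells and facets incident to a typical edge alternate around it in a single cycle, $n_{12}=n_{13}$.

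To pin down the intensities I would add one topological input, the \emph{global Euler relation} $m_0-m_1+m_2-m_3=0$. It comes from the fact that the alternating count $N_0(\rho)-N_1(\rho)+N_2(\rho)-N_3(\rho)$ is the Euler characteristic of the union of cells meeting $B_\rho$, hence $O(\rho^2)$; dividing by $\mathrm{vol}(B_\rho)$ and letting $\rho\to\infty$ kills it. Combining this with the symmetry relation on the three defining pairs and normalizing $m_3=1$ yields $m_0=v/n$, $m_2=f/2$ and $m_1=v/n+f/2-1$. The remaining six entries are then read off mechanically: $m_1n_{13}=m_3n_{31}$ gives $n_{12}=n_{13}=2n(v+f-2)/\bigl((f-2)n+2v\bigr)$; $m_2n_{21}=m_1n_{12}$ with $n_{20}=n_{21}$ gives $n_{20}=n_{21}=2(v-2)/f+2$; and $m_0n_{01}=m_1n_{10}$, $m_0n_{02}=m_2n_{20}$ give $n_{01}=(f-2)n/v+2$ and $n_{02}=(f-2)n/v+n$.

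The closing algebra is routine; the substance lies in establishing these limiting identities for a \emph{general} normal, face-to-face convex mosaic rather than a random or periodic one. I expect the main obstacle to be the rigorous proof that boundary contributions are genuinely of lower order, so that both the flag-counting symmetry and the global Euler relation survive the passage to the limit — this is precisely where the normality hypothesis (uniform inner and outer radii $r,R$) does the work, by bounding the number and size of cells, facets and edges that $\partial B_\rho$ can meet. The geometric claims $n_{23}=2$ and $n_{12}=n_{13}$ also merit careful justification from the face-to-face property together with the local finiteness supplied by normality.
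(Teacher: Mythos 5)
Your argument is correct and reproduces the matrix exactly, but it closes the system of equations with a different key input than the paper does. Both proofs share the same backbone: double counting of incidences in a large ball $B_\rho$, with normality guaranteeing that boundary contributions are of order $O(\rho^2)$ and hence negligible, plus Euler's relation on a typical cell (giving $n_{31}=v+f-2$) and the local facts $n_{10}=n_{23}=2$, $n_{20}=n_{21}$, $n_{12}=n_{13}$. The difference is in the third structural input. The paper never introduces face intensities: it gets $n_{21}$ from a count of edge--face incidences within cells combined with the ratio $N_f(r)/N_c(r)\to f/2$, gets $n_{02}$ from a double count of the vertex degrees $\deg_C(p)$ inside cells, and then obtains $n_{01}$ from Euler's formula applied to the \emph{vertex figure} at a node, i.e. $n_{01}-n_{02}+n_{03}=2$ --- a purely local, elementary fact. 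You instead pin down the intensity vector $(m_0,m_1,m_2,m_3)$ using the \emph{global} Euler relation $m_0-m_1+m_2-m_3=0$ and then read off all six nontrivial entries from the single symmetry relation $m_i n_{ij}=m_j n_{ji}$. Your route is more systematic (one identity does all the work, and it is essentially the scheme used for random mosaics in Schneider--Weil); what it costs is that the global Euler relation itself requires a genuine topological justification --- you must show that the Euler characteristic of the union $U_\rho$ of cells meeting $B_\rho$ is $O(\rho^2)$, e.g. via $\chi(U_\rho)=\chi(\partial U_\rho)/2$ for a compact $3$-manifold with boundary together with an $O(\rho^2)$ bound on the number of boundary faces --- a step you correctly flag but do not carry out, and which the paper's local vertex-figure relation avoids entirely.

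One assertion of yours is false as stated, though repairable: normality does \emph{not} imply that the limits $m_i=\lim_{\rho\to\infty} N_i(\rho)/\mathrm{vol}(B_\rho)$ exist. Normality only makes boundary effects negligible; the densities themselves can oscillate at arbitrarily large scales (regions of small cells alternating with regions of large cells), so these limits need not exist under the theorem's hypotheses. However, your computation only ever uses ratios (your normalization $m_3=1$ makes this explicit), so you should define $m_i=\lim_{\rho\to\infty} N_i(\rho)/N_3(\rho)$ instead. The existence of these ratio limits does follow from the hypothesis, assumed in the paper, that all averages $n_{ij}$ exist, together with your own flag-counting identity: for instance $N_0/N_3=n_{30}(\rho)/n_{03}(\rho)+o(1)\to v/n$ and $N_2/N_3\to f/2$, after which the global Euler relation yields $N_1/N_3\to v/n+f/2-1$. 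With that adjustment, and with the topological estimate above supplied, your proof is complete.
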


\begin{proof}
Clearly, $n_{ii}=1$ for $i=1,2,3,4$, and since each face belongs to exactly two cells, and each edge has exactly two endpoints, we have $n_{23}=n_{10}=2$.
The formula $n_{31}=v+f-2$ follows from applying Euler's formula for each cell of $\mathcal{M}$, and observing that then the same formula holds for the average numbers of faces, edges and vertices of a cell.

Let $r$ be sufficiently large, and let $B_r$ be the Euclidean ball, centered at the origin $o$ and with radius $o$. Let $N_v(r)$, $N_e(r)$, $N_f(r)$ and $N_c(r)$ denote the number of vertices, edges, faces and cells of $\mathcal{M}$ in $B_r$, respectively.

Note that if $r$ is sufficiently large, the sum of the numbers of edges of all faces in $B_r$ is approximately $N_f(r) n_{21}$, and since almost all face in $B_r$ belongs to exactly two cells in $B_r$, and each edge of a given cell belongs to exactly two faces of the cell, we have that this quantity is approximately equal to $N_c(r) n_{31}$. On the other hand, the sum of the numbers of faces the cells in $B_r$ have is approximately $N_c(r) f \approx 2 N_f(r)$.
More specifically, we have
\[
\frac{f}{2} = \lim_{r \to \infty} \frac{N_f(r)}{N_c(r)} = \frac{n_{31}}{n_{21}},
\]
which readily yields that $n_{21} = n_{20} = \frac{2(v+f-2)}{f}$.

Note that the number of cell-vertex incidences in $B_r$ is approximately $N_v(r) n \approx N_c(r) v$. Furthermore, for any incident cell-vertex pair $C,v$, the number of faces that contain the vertex and is contained in the cell is equal to the number of edges with the same property. Let us denote this common number by $\deg_C(v)$, which then denotes the degree of the vertex $v$ in the cell $C$. We compute the approximate value of the quantity $Q = \sum_{C \subset B_r, v \in C} \deg_C(v)$ in two different ways.

On one hand, we have
\[
Q = \sum_{C \subset B_r} 2 e(C) \approx 2 n_{31} N_c(r),
\]
where $e(C)$ denotes the number of the edges of the cell $C$.
On the other hand, since any face belongs to exactly two cells, we also have
\[
Q = \sum_{v \in B_r} \sum_{C \subset B_r, C \ni v} \deg_C(v)
\approx 2 n_{02} N_v(r).
\]
More precisely, we have obtained that
\[
\frac{n}{v} = \lim_{r \to \infty} \frac{N_c(R)}{N_v(r)} = \frac{n_{02}}{n_{31}},
\]
which implies the expression for $n_{02}$.

The value of $n_{01}$ can be obtained from the application of Euler's formula for the vertex figure at every node.
Finally, the value of $n_{12}=n_{13}$ can be obtained from the values of the other $n_{ij}$s like the value of $n_{20}=n_{21}$.
\end{proof}

\begin{rem}
By Theorem~\ref{thm:nij}, it seems that many combinatorial properties of the convex mosaic $\mathcal{M}$ are determined by three parameters, say by $v,f,n$. One may try to find upper and lower bounds for these values by observing that each entry in $[n_{ij}]$ has a minimal value: e.g. $n_{30}, n_{32}, n_{03}, n_{01} \geq 4$ and $n_{12},n_{21} \geq 3$. Nevertheless, solving these inequalities puts no restriction on the values of $v,f,n$, apart from the trivial inequalities $n,v,f \geq 4$.
It is worth noting that in contrast, for convex polyhedra (or, in other words, for convex spherical $2$-dimensional mosaics, cf. Remark~\ref{rem:dual_mosaic}), the sharp inequalities $\frac{v}{2}+2 \leq f \leq 2v+4$ \cite{Steinitz1, Steinitz2} are immediate consequences of the fact that each face of the polyhedron has at least $3$ vertices, and each vertex belongs to at least $3$ faces.
\end{rem}

\begin{rem}
Note that if $\mathcal{M}$ is a convex mosaic in $\Re^3$ and a convex mosaic $\mathcal{M}^{\circ}$ is its dual, then
\[
n_{ij}(\mathcal{M}^{\circ}) = n_{(3-i)(3-j)} (\mathcal{M}),
\]
for all $0 \leq i,j \leq 3$.
\end{rem}

\section{Proof of  the Theorems } \label{s:p1}
\subsection{Proof of Theorem \ref{thm:harmonic} and the volumes of polar domains}\label{ss:eupolar}

\begin{proof}

First we show that in case of Euclidean mosaics (in arbitrary dimensions) the harmonic degree may be interpreted as the averaged inverse sum of two angles linked by polarity, one of which is the internal vertex angle of a cell.

Consider a convex  face-to-face mosaic $\mathcal{M}$ in $\Re^d$. For any cell $C$ in $\mathcal{M}$ and any vertex $p$ of $C$, let $I(C,p) \subset \mathbb{S}^{d-1}$ denote the set of unit vectors such that the rays in the direction of a vector in $I(C,p)$ and starting at $p$ contain points of $C \setminus \{ p \}$. Furthermore, let $E(C,p) \subset \mathbb{S}^{d-1}$ denote the set of outer unit normal vectors of $C$ at $p$. Then, by definition, we have 
that $E(C,p)$ is the polar $(I(C,p))^{\circ}$ of $I(C,p)$. Let us denote the spherical volumes of $E(C,p)$ and $I(C,p)$ by $\Omega_E(C,p)$ and $\Omega_I(C,p)$, respectively, and let $\bar \Omega_E, \bar \Omega_I$ define the average values of $\Omega_E(C,p)$ and $\Omega_I(C,p),$ respectively, over all incident pairs $C$ and $p$.

Note that for any cell $C$, the family of sets $E(C,p)$, where $p$ runs over the vertices of $C$, is clearly a spherical mosaic of $\mathbb{S}^{d-1}$, and thus, the total area of the members of this family is the surface area $S_{d-1}$ of the sphere. The same statement holds for the family of sets $I(C,p)$, where $C$ runs over the cells containing a given node $p$.

Now, consider a large ball $B$ of space with radius $r$, and denote by $N_c$ and $N_v$ the numbers of cells and nodes of $\mathcal{M}$ in $B$, respectively. Then, for the number $k(r)$ of incident pairs of cells and vertices in $B$  we have
\begin{equation} \label{polar0}
k(r) \approx N_c \bar{v} \approx N_v \bar{n}.
\end{equation}  
The sums  $\omega _I, \omega _E$ of $\Omega _I(C,p)$ and $\Omega _E(C,p)$ (over all pairs of cells $C$ and incident vertices $p$ in $B$) may be written as:
\begin{equation}\label{polar1}
\begin{array}{rcl}
\omega _I & \approx & N_v S_{d-1} \\
\omega _E & \approx & N_c S_{d-1},
\end{array}
\end{equation}
so,  for the averages $\bar \Omega _I=\lim_{r \to \infty} (\omega _I/k), \bar \Omega_E = \lim_{r\to \infty} (\omega _E/k)$ we get
\begin{equation}\label{polar2}
\bar \Omega _I \bar n= \bar \Omega_E \bar v =S_{d-1}.
\end{equation}
Thus, by (\ref{polar2}) we have
\begin{equation}
\bar{\Omega} = \bar \Omega _I + \bar \Omega _E = \frac{S_{d-1}}{\bar{v}} + \frac{S_{d-1}}{\bar{n}},
\end{equation}
implying that
\begin{equation}\label{eq:harmonic_geometric}
\bar{h}\bar{\Omega} = S_{d-1}.
\end{equation}
\end{proof}
\begin{rem}\label{rem:geometric_Euclidean}
Substituting the value of $S_{d-1}$ into (\ref{eq:harmonic_geometric}), we obtain that for planar mosaics $\bar{h}=\frac{2\pi}{\bar{\Omega}}$, and for mosaics in $\Re^3$ $\bar{h} = \frac{4 \pi}{\bar{\Omega}}$.
\end{rem}

\begin{rem}
If $d=2$, then at each vertex we have $\Omega_I(C,p)+\Omega_E(C,p) = \pi$, implying $\bar \Omega =\pi$ and this, via  equation (\ref{eq:harmonic_geometric}) yields $\bar h=2$. 
\end{rem}

\begin{rem}
As we observed, for any node $p$ and any cell $C$ incident to $p$, we have
\[
\sum_{ \{ p : p \in C\} } \Omega_E(C,p) = \sum_{\{ C : p \in C \} } \Omega_I(C,p) = S_{d-1}.
\]
While the equality $$\sum_{\{ C : p \in C \} } \Omega_E(C,p) = S_{d-1}$$ does not hold in general, it does hold in case of hyperplane mosaics.
\end{rem}

\begin{rem}
Clearly, the inequalities $\bar{v}, \bar{n} \geq 4$ readily imply $\bar{h} \geq 2$, and by Theorem~\ref{thm:harmonic}, $\bar{\Omega} \leq \frac{1}{2} S_{d-1}$.
This inequality is also an immediate consequence of the well-known result of Gao, Hug and Schneider \cite{GHSch}, stating that for any spherically convex set $A$ of a given spherical volume, the volume of its polar $A^{\circ}$ is maximal if $A$ is a spherical cap of the given spherical volume.
\end{rem}

\subsection{Proof of Theorem  \ref{thm:interval} and the range of the harmonic degree}\label{ss:foam}

\subsubsection{Mosaics with high harmonic degree: hyperplane mosaics}
If $\mathcal{M}$  is generated by dissecting $\Re ^d$ with $(d-1)$-dimensional hyperplanes then it is called a \emph{hyperplane mosaic} \cite{Schneider}.  An elementary computation shows that the harmonic degree of a normal mosaic, generated by hyperplanes in general position, is
\begin{equation}\label{hyperplane}
\bar h =2^{(d-1)}.
\end{equation}
These mosaics appear to have the highest harmonic degree. They are certainly not the only mosaics with $\bar h = 2^{d-1}$, though. In $d=2$ dimensions all convex mosaics have $\bar h =2$ and in $d=3$ dimensions we have the continuum of prismatic mosaics with $\bar h =4$.

\subsubsection{Mosaics with low harmonic degree: iterated foams and their duals}\label{subsec:foam}
Here we define mosaics which appear to have extremely low harmonic degrees.

Consider a Euclidean mosaic $\mathcal{M} = \mathcal{M}_0$ with $\bar{n}_0=d+1$ as the average degree of nodes; we remark that such mosaics exist in all dimensions, we construct the dual of such a mosaic in the proof of Theorem~\ref{thm:interval}.
In addition, we assume that the edge lengths of the mosaic are uniformly bounded; that is there are some $0 < a < b$ such that
the value of each such quantity is between $a$ and $b$, and we assume the same about the angles between any two faces of $\mathcal{M}$.

Note that since all $d$-dimensional convex polytopes with $d+1$ vertices are simplices,
the vertex figures of `almost all' nodes of $\mathcal{M}$ are simplices.
Now, for each node having a simplex as a vertex figure, replace the node with its vertex figure.
More precisely, if $p$ is a node whose vertex figure is a simplex, define the cell $C_p$ as the convex hull of the points of the edges starting at $p$, at the distance $\varepsilon > 0$ from $p$ for some fixed value of $\varepsilon$ independent of $p$, and replace each cell $C$ containing $p$ with the closure of
$C \setminus C_p$. Then,if this process is carried out simultaneously at all nodes $p$, we obtain a convex, face-to-face mosaic $\mathcal{M}_1$, which, under our condition, is normal.
Applying this procedure $k$ times we obtain the convex, face-to-face, normal mosaic $\mathcal{M}_k$. We will call the $k \to \infty$ limit of such a sequence a $d$-dimensional \emph{iterated foam},
referring to the fact that in a physical foam in $d=2$ and $d=3$ dimensions we always have $\bar n = d+1$.

Clearly, for all $k \geq 1$, we have $\bar{n}_k = \bar{n}(\mathcal{M}_k) = d+1$.
Consider a sufficiently large region of space. Then the number of vertex-cell incident pairs in $\mathcal{M}$ is approximately $N_c \bar{v} \approx N_v \bar{n} = N_v (d+1)$,
where $N_c$ and $N_v$ denote the numbers of the cells and the nodes of the mosaic in this region, respectively.
An elementary computation yields that for the mosaic $\mathcal{M}_1$, this number is approximately $N_c d \bar{v} + (d+1) N_v \approx (d+1) N_c \bar{v}$, and
the number of cells of $\mathcal{M}_1$ in this region is about $N_c + N_v \approx \left( 1 + \frac{v}{d+1} \right) N_c$.
Thus, taking limit, we obtain that the average degree of a cell of $\mathcal{M}_1$ is $\bar{v}(\mathcal{M}_1) = \frac{(d+1)^2 \bar{v}}{d+1+\bar{v}}$.
Setting $\bar{v}_k = \bar{v}(\mathcal{M}_k)$, we similarly obtain the recursive formula $\bar{v}_{k+1} = \frac{(d+1)^2 \bar{v}_k}{d+1+\bar{v}_k}$ for all nonnegative integers $k$.

An elementary computation yields that $\left| \bar{v}_{k+1} - d(d+1)\right| = \frac{(d+1) \left| \bar{v}_k-d(d+1)\right|}{d+1+\bar{v}_k} \leq \frac{1}{2} \left| \bar{v}_k-d(d+1) \right|$ for all $\bar{v}_k \geq d+1$. This implies that for any initial value $\bar{v} \geq d+1$, the sequence $\{ \bar{v}_k \}$ converges to $d(d+1)$, and thus, the sequence $\{ h(\mathcal{M}_k)\}$ converges to $\frac{d(d+1)^2}{d(d+1)+(d+1)}=d$.

We note that the above procedure can be dualized. In this case, starting with a mosaic in which all cells are simplices, in each step we divide the cell into regions by taking the convex hulls of a given interior point of the cell with each facet of the cell.  Lines 31 and $31'$ of Table \ref{tab:1} in the Appendix summarize the main parameters of these iterated mosaics.

\begin{figure}[ht]
\begin{center}
\includegraphics[width=\textwidth]{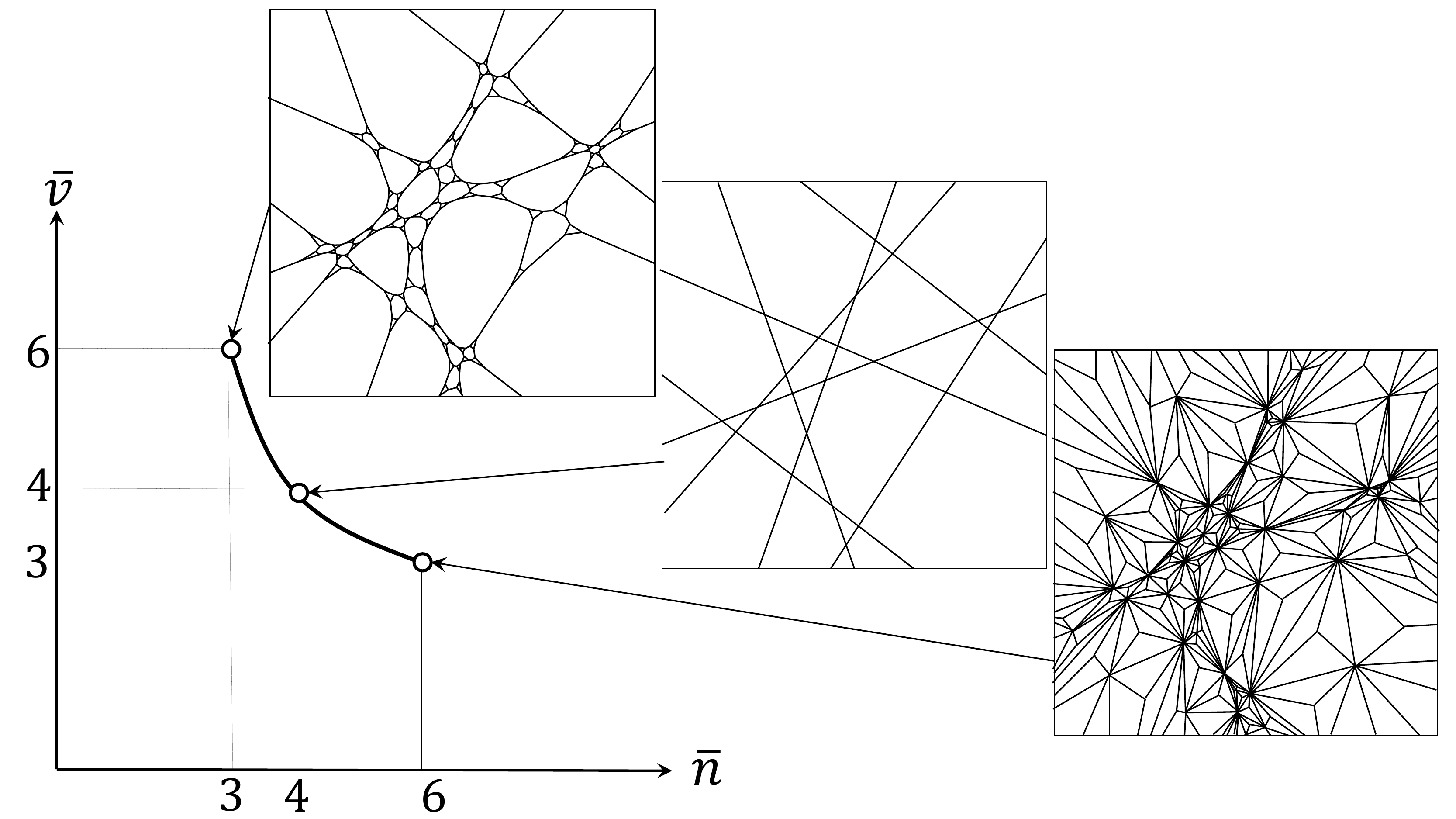}
\caption{Illustration of an iterated foam and its dual in $d=2$ dimensions.  We used the hyperplane mosaic (middle panel) as initial condition and ran $k=2$ iterative steps both in the direction of iterated foams (upper panel) as well as their duals (lower panel). Note that in $d=2$ dimensions these iterative steps change both $\bar n$ and $\bar v$, however, the harmonic degree
remains constant at $\bar h =2$. Also note that in higher dimensions hyperplane mosaics may not be used as initial conditions for these iterations. Iterated foams and their duals in $d=3$ dimensions are shown in the symbolic plane on Figure \ref{fig:1}.}\label{fig:foam}
\end{center}
\end{figure}

\begin{rem}
We note that for planar mosaics the iterating process (and also its dual), can be extended to any mosaic in a natural way, and after one iteration step the degree of every node (in case of its dual the degree of every cell) is equal to $3$. This is illustrated in Figure \ref{fig:foam} where we iterate a (finite domain of a) hyperplane mosaic for $k=2$ steps in both directions. 
\end{rem}

\subsubsection{Proof of Theorem \ref{thm:interval}}

Let $\mathcal{M}$ be the standard cubic mosaic in $\Re^d$, whose vertices are the points of the integer lattice $\mathbb{Z}^d$, and note that $\bar{h}=2^{d-1}$.
We define a new lattice $\mathcal{M}'$ as the first barycentric subdivision of $\mathcal{M}$. In this lattice the centroid of each face of $\mathcal{M}$ is a vertex of $\mathcal{M}'$, and cells correspond to \emph{flag}s of $\mathcal{M}$, where a flag is a sequence $F_0 \subset F_1 \subset F_2 \subset \ldots \subset F_d$ of faces of $\mathcal{M}$, with $\dim F_i = i$ for all values of $i$. In this case the cell associated to the flag is the convex hull of the centroids of the $F_i$s.

We compute the harmonic degree of $\mathcal{M}'$. Note that since every cell of $\mathcal{M}'$ is a simplex, we have $\bar{v}(\mathcal{M}') = d+1$.
First, observe that since any cell of $\mathcal{M}$ has $2d$ facets and by the fact the every face of a cube is a cube, choosing the faces of a flag in the order $F_d, F_{d-1}, \ldots, F_0$, we have that the number of flags belonging to any given cube in the mosaic is $2d \cdot (2d-2) \cdot \ldots \cdot 2 = 2^d d!$.
We note that the same quantity can be obtained if we choose the faces of a flag in the opposite order. In this approach first we choose a vertex of the cell, then we extend this point to an edge parallel to a chosen coordinate axis, which then can be extended to a $2$-face choosing another coordinate axis. In this way the number of flags is equal to the product of the number of vertices ($2^d$), and the number of permutations of the $d$ coordinate axes ($d!$).

Applying arguments similar to these two counting arguments, one may show that each $i$-face belongs to $2^i i! (d-i)!$ flags within one cell, and as each $i$-face belongs to $2^{d-i}$ cells, the total number of flags an $i$-face belongs to is $2^d i! (d-i)!$. Furthermore, the proportion of the $i$-faces compared to the number of cells is $\binom{d}{i}$.
Thus, the average degree of a node in the barycentric subdivision of the cubic lattice is
\[
\bar{n}(\mathcal{M}')= \frac{\sum_{i=0}^d 2^d i! (d-i)! \binom{d}{i}}{\sum_{i=0}^d \binom{d}{i}} = (d+1)!,
\]
implying
\begin{equation}\label{eq:h'}
\bar{h}' = \bar{h}(\mathcal{M}') = \frac{(d+1)!}{1+d!},
\end{equation}
where an elementary computation yields that $d \leq \frac{(d+1)!}{1+d!} < d+1$ for all $d \geq 1$.

\emph{Case 1}, $\bar{h}' = \frac{(d+1)!}{1+d!} \leq \bar{h}^{\star} \leq \bar{h} = 2^{d-1}$.
We construct a mosaic with harmonic degree $\bar{h}^{\star}$.
To do it, we use four types of layers.

A first type layer is a translate of the part of the cubic lattice between the hyperplanes $\{ x_d = 0\}$ and $\{ x_d = 1 \}$.
A second type layer is the same part of the \emph{subdivided} cubic lattice $\mathcal{M}'$.
For the third type, we take the translates of a partial subdivision of the cubic lattice: each cube in the strip between $\{ x_d = 0\}$ and $\{ x_d = 1 \}$ is subdivided by the centroids of all faces apart from those in the hyperplane $\{ x_d = 1\}$. The fourth type layers are the reflected copies of third type layers about the hyperplane $\{ x_d = 0\}$.

The building bricks of the mosaic are strips $S(k,l)$ of width $k+l+2$, where $k$ and $l$ are positive integers. Here $S(k,l)$  consisting of $k$ first type, $1$ fourth type, $l$ second type and $1$ third type layer in this consecutive order, where the layers are attached in a face-to-face way. Observe that if $k$ and $l$ are sufficiently large, then the harmonic degree of a strip $S(k,l)$ is approximately $\frac{k}{k+l} \bar{h} + \frac{l}{k+l} \bar{h}'$.

Since $\bar{h}' \leq \bar{h}^{\star} \leq \bar{h}$, there is some $0 \leq \lambda \leq 1$ such that $\bar{h}^{\star} = \lambda \bar{h} + (1-\lambda) \bar{h}'$.
Let $\{(k_m,l_m) \}$ be a sequence of pairs of positive integers such that $k_m+l_m \to \infty$, and $\frac{k_m}{k_m+l_m} \to \lambda$.

We define the mosaic $\mathcal{M}^{\star}$ as follows. Consider a strip $S_1=S(k_1,l_1)$. Attach two copies of $S(k_2,l_2)$ to the two bounding hyperplanes of
$S_1$ in a face-to-face way, to obtain $S_2$ as the union of these three strips. Then $S_3$ is constructed by attaching two copies of $S(k_3,l_3)$ to $S_2$ in a face-to-face way. Continuing this procedure, we obtain the mosaic $\mathcal{M}^{\star}$ as the limit of the strip $S_m$, where $m \to \infty$.
Then the harmonic index of $\mathcal{M}^{\star}$ is $\bar{h}^{\star}$.

\emph{Case 2}, $d < \bar{h}^{\star} < \bar{h}'$.
Observe that in the subdivided cubic mosaic $\mathcal{M}'$ defined above, every cell is a simplex. Thus, we may apply the dual of the algorithm discussed in 
Subsection~\ref{subsec:foam}, namely in each step we divide each cell $C$ into $d+1$ new cells by taking the convex hulls of a given interior point of $C$ and the facets of $C$. Let us denote by $\mathcal{M}_k$ and $\bar{h}_k$ the mosaic obtained by $k$ subsequent applications of this procedure, and its harmonic degree, respectively.
Then the sequence $\{ \bar{h}_k\}$ tends to $d$, and thus, there is a smallest value of $k$ such that $\bar{h}^{\star}$ is in the interval $[ \bar{h}_{k+1}, \bar{h}_k]$. To construct a suitable mosaic $\mathcal{M^{\star}}$ with harmonic degree $\bar{h}^{\star}$, we follow the idea of the proof in Case 1, and divide  only a part of the cells of $\mathcal{M}_k$ into new cells.

\begin{rem}
We remark that if the cells are not assumed to be convex, we may construct face-to-face mosaics with harmonic degree arbitrarily close to $2$. Indeed, starting with a cubic mosaic we can break each facet by choosing a point from inside and slightly moving it. Thus, for any $h \in (2, 2^{d-1}]$, there is a face-to face mosaic with harmonic degree equal to $h$.
\end{rem}

\subsubsection{Proof of Proposition~\ref{prop:h_estimate}}

The first part of the proposition follows from the trivial estimates $\bar{v}, \bar{n} \geq d+1$.
To prove the second part, we need a lemma. We note that the minimum number of tetrahedra such that each convex polyhedron with $k$ vertices can be decomposed into is not known. This fact and the idea of the proof of Lemma~\ref{lem:tetralas} can be found in \cite{approx}.

\begin{lem}\label{lem:tetralas}
Any convex polyhedron $P$ in $\Re^3$ with $v$ vertices can be decomposed into at most $2v-7$ tetrahedra.
\end{lem}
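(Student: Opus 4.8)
The plan is to reduce to the case where all faces of $P$ are triangles and then decompose $P$ by coning from a single vertex. First I would triangulate every two-dimensional face of $P$ using only its own diagonals; a face with $k$ vertices splits into $k-2$ triangles, and crucially no new vertices are introduced. The resulting boundary complex $P'$ therefore has the same $v$ vertices as $P$, and it is a triangulation of the topological $2$-sphere. Writing $e$ and $f$ for its numbers of edges and (triangular) faces, every face has three edges while every edge lies in exactly two faces, so $3f=2e$; combining this with Euler's formula $v-e+f=2$ gives $f=2v-4$ (and $e=3v-6$).

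Next I would fix an arbitrary vertex $p$ of $P'$ and form, for each triangular face $T$ not containing $p$, the tetrahedron $\conv(\{p\}\cup T)$. Because $P$ is convex, this \emph{pulling} (coning) construction yields a decomposition of $P$ into tetrahedra with pairwise disjoint interiors; the faces that do contain $p$ contribute only flat, degenerate cones and are simply discarded. Thus the number of tetrahedra in the decomposition equals the number of faces of $P'$ not incident to $p$, namely $f-\deg(p)=(2v-4)-\deg(p)$, where $\deg(p)$ denotes the number of faces (equivalently, edges) of $P'$ meeting at $p$.

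It remains to bound $\deg(p)$ from below. Since $P'$ is a triangulation of the sphere, every vertex is incident to at least three triangles, so $\deg(p)\geq 3$ for every choice of $p$. Consequently the number of tetrahedra is at most $(2v-4)-3=2v-7$, which is the desired bound; note that it is forced precisely by the minimal possible vertex degree, so any vertex $p$ works and no optimization over $p$ is needed.

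The one genuinely nontrivial point — the main obstacle — is the justification that coning from $p$ to the faces of $P'$ not containing $p$ really produces a valid, interior-disjoint tetrahedral decomposition of the convex body $P$. This is the standard pulling-triangulation fact for convex polytopes and rests essentially on the convexity of $P$: every point of $P\setminus\{p\}$ lies on a unique ray from $p$ that exits through a uniquely determined face not containing $p$. Once this is granted, the remainder of the argument is the elementary Euler-characteristic bookkeeping carried out above.
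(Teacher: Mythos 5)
Your overall strategy is the same as the paper's: triangulate the boundary of $P$ without introducing new vertices, use Euler's formula to count $2v-4$ triangles, cone from a vertex $p$, and save at least $3$ triangles as degenerate. However, there is a genuine error in the key step --- precisely the one you single out as the nontrivial point. You triangulate \emph{every} face arbitrarily and then assert that the degenerate cones are exactly those over triangles incident to $p$, so that the decomposition consists of exactly $f-\deg(p)$ tetrahedra. This is false whenever a non-triangular face $F$ of $P$ contains $p$ and happens to be triangulated from a vertex other than $p$: a triangle $T\subset F$ with $p\notin T$ is then coplanar with $p$ (both lie in the plane of $F$), so $\conv(\{p\}\cup T)$ is flat and is not a tetrahedron at all. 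Concretely, take the square pyramid with base $p\,a\,b\,c$ and apex $q$, and triangulate the base by the diagonal $ac$: your recipe keeps the flat set $\conv(\{p,a,b,c\})$ along with the two genuine tetrahedra $\conv(\{p,a,b,q\})$ and $\conv(\{p,b,c,q\})$, so the claimed count $f-\deg(p)=3$ is wrong (the actual decomposition has $2$ tetrahedra), and your justification that every point lies on a ray ``exiting through a uniquely determined face not containing $p$'' breaks down exactly for these coplanar triangles.

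The gap is repairable, and the repair is precisely the paper's device: triangulate each face \emph{containing} $p$ by the diagonals emanating from $p$ itself, and only the remaining faces arbitrarily. Then every triangle not containing $p$ lies in a face of $P$ not containing $p$; since for a convex polytope the plane of such a face meets $P$ only in that face, this plane misses $p$, so all surviving cones are genuine tetrahedra and they do decompose $P$. The count then needs a slightly different bookkeeping than yours: instead of $\deg(p)\geq 3$ in the triangulated sphere, one uses that $p$ lies in at least three faces of $P$, each contributing at least one discarded triangle, whence at most $(2v-4)-3=2v-7$ triangles survive. Note that your inequality is ``safe'' in the sense that degenerate pieces only decrease the number of genuine tetrahedra; but with your arbitrary triangulation you would still owe a separate argument that the surviving tetrahedra cover $P$, which your uniqueness claim as written does not supply. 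The paper's choice of triangulation makes both the covering property and the count immediate.
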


\begin{proof}
Let the faces of $P$ be $G_1, \ldots, G_f$, and let $f_i$ denote the number of edges of $G_i$.
Then $\sum_{i=1}^f f_i = 2e = 2v+2f-4$, where $e$ is the number of edges of $P$. 

Let $p$ be any vertex of $P$. Let us triangulate each face of $P$ containing $p$ by the diagonals starting at $p$, and all other faces of $P$ by the diagonals starting at an arbitrary vertex of the face. Then the number of all triangles is $\sum_{i=1}^f (f_i-2)=2v+2f-4-2f=2v-4$.
Since each face contains at least one triangle, and each vertex belongs to at least three faces, the number $m$ of triangles in the faces not containing $p$ is at most
$m \leq 2v-7$. Now, if these triangles are $T_1,T_2, \ldots, T_m$, then the tetrahedra $\conv (\{p\} \cup T_i)$, $i=1,2,\ldots,m$ is a required decomposition of $P$.
\end{proof}

Consider a mosaic $\mathcal{M}$ in $\Re^3$, and a sufficiently large region. Let $N_c$ and $N_v$ denote the numbers of cells and nodes of $\mathcal{M}$ in this region. For any cell $C_i$, let $v_i$ denote the number of vertices of $C_i$. Then the number of cell-vertex incidences in this regions is approximately
$\sum_{i} v_i \approx N_c \bar{v} \approx N_v \bar{n}$.

By Lemma~\ref{lem:tetralas}, these cells can be decomposed into at most $\sum_{i} (2v_i-7) \approx 2 N_c \bar{v} - 7 N_c$ tetrahedra.
It is well known that the  sum of the internal angles of any tetrahedron is greater than $0$ and less than $2\pi$ \cite{Gaddum}. Thus, the sum of all the internal angles
of the cells is at most $4\pi N_c \bar{v} - 14\pi N_c$. On the other hand, this sum is approximately equal to the product of the number of nodes and the total angle of a sphere; that is $4\pi N_v$. Thus, apart from a negligible error term, we have $4 \pi \frac{N_c \bar{v}}{\bar{n}} \leq 4\pi N_c \bar{v} - 14\pi N_c$. Taking a limit, we obtain that $\frac{2 \bar{v}}{\bar{n}} \leq 2 \bar{v} - 7$, implying that $\bar{n} \geq \frac{2 \bar{v}}{2\bar{v}-7}$.
Since $\bar{n} \geq 4$ clearly holds, we have that
\begin{equation}
\bar{n} \geq \max \{ 4, \frac{2 \bar{v}}{2\bar{v}-7} \}.
\end{equation}
It is an elementary computation to check that $4 \geq \frac{2 \bar{v}}{2\bar{v}-7}$ if and only if $\bar{v} \geq \frac{14}{3}$.
Since for any fixed value of $\bar{n}$, $\bar{h}$ is minimal at the minimal value of $\bar{v}$ it follows that under the condition that $\bar{v} \geq \frac{14}{3}$, we have $\bar{h} \geq \frac{28}{13}$. Furthermore, if $4 \leq \bar{v} \leq \frac{14}{3}$, then
$\bar{h}= \frac{2\bar{v}}{2\bar{v}-5}$, which is minimal if $\bar{v}=\frac{14}{3}$, and thus, $\bar{h} \geq \frac{28}{13}$ also in this case.

\section{Non-Euclidean and non face-to-face mosaics}\label{qvary}

\subsection{Non-Euclidean mosaics}
Mosaics, convex mosaics, and all notions described in Subsection~\ref{subsec:history}, excluding the notions of average degrees of cells and vertices, can be defined in a natural way for spherical and hyperbolic spaces as well.
For spherical space, this includes average degrees as well; because of the compactness of the space it is even possible to avoid the usual limit argument applied to compute these values in $\Re^d$.

On the other hand, defining average values in hyperbolic space seems problematic. Indeed, it is well known that under rather loose restrictions, in a packing of congruent balls in $\mathbb{H}^d$, the number of balls intersecting the boundary of a hyperbolic ball $B$ of large radius is \emph{not} negligible compared to the number of balls contained in $B$. This phenomenon is explored in more details, for instance, in \cite{FTK}, and can be generalized for the numbers of cells of a normal mosaic in a natural way.

A straightforward solution to this problem is to examine only regular mosaics, in which the degree of every cell, and the degree of every vertex is equal, which offers a natural definition for $\bar{n}$ and $\bar{v}$. We do this in Subsection~\ref{ss:noneuclidean_regular}.
To circumvent this problem in a more general way, we use the geometric interpretation of harmonic degree for mosaics in $\Re^d$, appearing in Subsection~\ref{ss:eupolar}; this interpretation, in particular, provided a different proof of the fact that harmonic degree is $2$ for every planar Euclidean mosaic.

In Subsection~\ref{ss:noneupolar} we generalize this geometric interpretation for mosaics in $\S^2$ and $\HH^2$, and show that for spherical mosaics it coincides with the original definition of harmonic degree. Finally, we show that this value is less than $2$ for any spherical mosaic, and it is at least $2$ for any hyperbolic mosaic, using any reasonable interpretation of average.

\subsubsection{Non-Euclidean regular honeycombs in $\mathbb{S}^d, \mathbb{H}^d$ for $d=2,3$. }\label{ss:noneuclidean_regular}

Here we show that in $d=2$ dimensions, Euclidean mosaics separate regular spherical 
mosaics from regular hyperbolic mosaics on the $[\bar n, \bar v]$ symbolic plane.

While Plato's original idea of filling the Euclidean space with regular solids proved to be incorrect, if we relax the condition that the embedding space has no curvature then all Platonic solids may fill space by what we call a \emph{regular honeycomb}.  We briefly review these mosaics to show how they are represented in our notation and how their harmonic degrees are spread.

Let $\mathcal{M}$ be a honeycomb in a space of constant curvature of dimension $d$. A sequence $F_0 \subset F_1 \subset \ldots \subset F_d$, where $F_i$ is an $i$-dimensional face of $\mathcal{M}$, is called a \emph{flag} of $\mathcal{M}$ (cf. the proof of Theorem \ref{thm:interval}). We say that $\mathcal{M}$ is \emph{regular}, if for any two flags of $\mathcal{M}$ there is an element of the symmetry group of $\mathcal{M}$ that maps one of them into the other one.
In particular, if $\mathcal{M}$ is a regular planar mosaic, then the cells of $\mathcal{M}$ are congruent regular $p$-gons, and at each node, an equal $q$ number of edges meet at equal angles. In this case $\{ p, q \}$ is called the \emph{Schl\"afli symbol} of $\mathcal{M}$. It is well known that up to congruence, for any values $p,q \geq 3$, there is a unique regular mosaic with Schl\"afli symbol $\{p,q\}$ (cf. \cite{Schulte1}). This mosaic if spherical if $p=3$ and $q=3,4,5$ or if $q=3$ and $p=3,4,5$, Euclidean
if $\{p,q \} = \{3,6 \}, \{4,4\}, \{6,3\}$, and hyperbolic otherwise. We note that the five regular spherical honeycombs correspond to the five Platonic polyhedra.
The Schl\"afli symbol of a higher dimensional mosaic can be defined recursively: it is $\{ p_1, p_2, \ldots, p_d \}$ if the Schl\"afli symbol of its cells are 
$\{ p_1, p_2, \ldots, p_{d-1} \}$ (which must correspond to a regular spherical mosaic), and the intersection of $\mathcal{M}$ with any sufficiently small sphere centered at a node of $\mathcal{M}$ is the regular spherical mosaic $\{ p_2, p_3, \ldots, p_d \}$.

In $d=2$ dimensions the $\bar h=2$ curve defines a partition of the $\Z^2$ grid on the $[\bar n, \bar v]$ symbolic plane  with the constraints $\bar n, \bar v \geq 3$. For a regular mosaic with Schl\"afli symbol $\{ p,q\}$, set $\bar{v}=p$, $\bar{n}=q$, or equivalently, $\bar{h}= \frac{pq}{p+q}$.
Then an elementary computation  (determining the sign of the quantity $\frac{1}{p}+\frac{1}{q} - \frac{1}{2}$ for all integers $p,q \geq 3$) shows that grid points on the $\bar h =2$ line correspond to regular Euclidean mosaics, grid points with $\bar h<2$ correspond to regular spherical mosaics and grid points with $\bar h >2$ correspond to regular hyperbolic mosaics.

In $d=3$ dimensions the $\bar h=4$ curve defines a partition of the $\Z^2$ grid on the $[\bar n, \bar v]$ symbolic plane in a similar sense, although
here only a finite number of grid points correspond to regular mosaics. We summarize these in Table \ref{tab:2}.

\begin{table} [ht] 
{
\begin{tabular}{| c |r||c|c|c|r|r|r|r|}
\hline
ID.  & Cell & Node &  Space& $\bar n$ & $\bar v$ &  $\bar h $ & Schl\"afli \\
\hline
\hline
 1 & cube & octahedron & Euclidean & 8 & 8 & 0.250 & $\{4,3,4\}$ \\
\hline
2 &  	 icosahedron & dodecahedron  & Hyperbolic	& 12	& 12	& 	0.167 & $\{3,5,3\}$ \\
\hline
3 &  	 dodecahedron & icosahedron  & Hyperbolic	& 20	& 20	& 	0.100 & $\{5,3,5\}$ \\
\hline
4 &  	 cube & icosahedron  & Hyperbolic	& 8	& 20	& 	0.175 & $\{4,3,5\}$ \\
\hline
5 &  	 dodecahedron & octahedron  & Hyperbolic	& 20	& 8	& 	0.175 & $\{5,3,4\}$ \\
\hline
6 &  	 tetrahedron & tetrahedron  & Elliptic	& 4	& 4	& 	0.500 & $\{3,3,3\}$ \\
\hline
7 &  	 octahedron & cube  & Elliptic	& 6	& 6	& 	0.333 & $\{4,3,4\}$ \\
\hline
8 &  	 cube &  octahedron  & Elliptic	& 8	& 4	& 	0.375 & $\{4,3,3\}$ \\
\hline
9 &  	 tetrahedron &  octahedron  & Elliptic	& 4& 8	& 	0.375 & $\{3,3,4\}$ \\
\hline
10 &  	dodecahedron & tetrahedron   & Elliptic	&  20 & 4	& 	0.300 & $\{5,3,3\}$ \\
\hline
11 &  	 tetrahedron & icosahedron  & Elliptic	&  4 & 20	& 	0.300 & $\{3,3,5\}$ \\
\hline

\end{tabular}
}
\vspace{0.5cm}
\caption{Regular honeycombs in $d=3$ dimensions}
\label{tab:2}
\end{table}

As we can observe, the  harmonic degree $\bar h$ of a mosaic appears to carry information both on the dimension and the curvature of the embedding space:
$\bar h=constant$ curves separate convex mosaics embedded in spaces with the same curvature sign  but different dimension and vice versa, they also separate regular mosaics embedded in spaces with the same dimension but different sign of curvature. Knowing one
of those parameters seems to permit us to obtain the other, based on the mosaic's harmonic degree.

\subsubsection{Non-Euclidean general face-to-face mosaics on $\mathbb{S}^2$ and $\mathbb{H}^2$}\label{ss:noneupolar}

Our goal is to extend the geometric interpretation of the harmonic degree to convex face-to-face mosaics on $\mathbb{S}^2$ and $\mathbb{H}^2$. First we describe how the duals of spherical mosaics can be constructed. To do this, first we compute the harmonic degree of spherical mosaics directly.

\begin{rem}\label{rem:dual_mosaic}
Clearly, projecting a convex polyhedron $P$ from an interior point to a sphere concentric to this point yields a spherical mosaic. Furthermore, in a spherical mosaic any two cells intersect in one edge, one vertex or they are disjoint. Using these properties it is easy to show that the edge graph of any spherical mosaic is $3$-connected and planar; such an argument can be found, e.g. in the proof of \cite[Claim 9.4]{BLNP}. By a famous theorem of Steinitz \cite{Steinitz2}, every $3$-connected planar graph is the edge graph of a convex polyhedron. Thus, up to combinatorial equivalence, we may regard a spherical mosaic as the central projection on $\mathbb{S}^2$ of a convex polyhedron $P$ containing the origin in its interior. This representation permits us to define the dual of a spherical mosaic associated to $P$ as the mosaic associated to its polar convex polyhedron $P^{\circ}$.
\end{rem}

In two dimensions, spherical mosaics may be characterized by the \emph{angle excess} associated with their cells which is equal to the solid angle subtended by the cell or, alternatively, the spherical area of the cell. Let $\mathcal{M}$ be a convex mosaic on $\mathbb{S}^2$ with $N_v$ nodes and $N_c$ cells. Since $\mathcal{M}$ is a tiling of $\mathbb{S}^2$, the average area of a cell is $\Omega_C = \frac{4\pi}{N_c}$. Similarly, the average area of a cell in the dual mosaic is $\Omega_N = \frac{4\pi}{N_v}$.

\begin{defn}\label{harmonicexcess}
For any spherical mosaic $\mathcal{M}$, we call the quantity $\bar{\mu}(\mathcal{M})= \frac{1}{\pi} \frac{\Omega_C \Omega_N}{\Omega_C + \Omega_N}$ the \emph{harmonic angle excess} of $\mathcal{M}$.
\end{defn}

\begin{prop}\label{prop:spherical_ftf}
The harmonic degree of any convex, face-to-face mosaic $\mathcal M$ on $\mathbb{S}^2$ is
\begin{equation}\label{eq:spherical}
\bar{h}(\mathcal{M}) = 2 - \bar{\mu}(\mathcal{M}).
\end{equation}
\end{prop}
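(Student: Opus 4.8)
The plan is to reduce everything to a double counting of incident cell--vertex pairs together with Euler's formula, exploiting the fact that on the compact surface $\mathbb{S}^2$ all averages are exact finite ratios, so no limiting argument is needed. Write $N_v, N_e, N_c$ for the numbers of nodes, edges and cells of $\mathcal{M}$, and let $k$ denote the number of incident cell--vertex pairs. Counting these pairs cell-by-cell gives $k = N_c \bar{v}$, and counting them node-by-node gives $k = N_v \bar{n}$; hence $\bar{v} = k/N_c$ and $\bar{n} = k/N_v$. Substituting into Definition~\ref{harmonic}, the factors of $k$ and of $N_v N_c$ cancel and I obtain the clean identity
\[
\bar{h}(\mathcal{M}) = \frac{\bar{n}\bar{v}}{\bar{n}+\bar{v}} = \frac{k}{N_v + N_c}.
\]

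Next I would identify $k$ geometrically. In each cell the number of vertices equals the number of bounding edges, so summing over cells gives $k = \sum_C e(C)$; since the mosaic is face-to-face, every edge lies on exactly two cells, so this sum equals $2N_e$. Thus $k = 2N_e$, and Euler's formula on the sphere, $N_v - N_e + N_c = 2$, turns the previous identity into
\[
\bar{h}(\mathcal{M}) = \frac{2N_e}{N_v+N_c} = \frac{2(N_v+N_c-2)}{N_v+N_c} = 2 - \frac{4}{N_v+N_c}.
\]

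It then remains to match the correction term against $\bar{\mu}$. Using $\Omega_C = 4\pi/N_c$ and $\Omega_N = 4\pi/N_v$ from Definition~\ref{harmonicexcess}, a direct simplification gives $\frac{\Omega_C\Omega_N}{\Omega_C+\Omega_N} = \frac{4\pi}{N_v+N_c}$, whence $\bar{\mu}(\mathcal{M}) = \frac{4}{N_v+N_c}$. Comparing this with the displayed expression for $\bar{h}(\mathcal{M})$ yields $\bar{h}(\mathcal{M}) = 2 - \bar{\mu}(\mathcal{M})$, as claimed.

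I do not expect a genuine obstacle here: the argument is elementary double counting combined with Euler's formula. The only points demanding care are the two combinatorial identities $N_c\bar{v}=N_v\bar{n}$ and $\sum_C e(C) = 2N_e$ (both valid precisely because $\mathcal{M}$ is face-to-face, so every edge is shared by exactly two cells and the cell--vertex incidences are counted unambiguously), together with the observation that on $\mathbb{S}^2$ the averages $\bar v, \bar n$ are exact ratios rather than limits, which is what makes Euler's formula directly applicable.
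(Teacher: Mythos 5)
Your proof is correct, but it takes a genuinely different route from the paper's. Both arguments start from the same incidence count $\bar{v}N_c = \bar{n}N_v$ and both reduce the claim to the single identity $2N_v = \bar{v}N_c - 2N_c + 4$ together with $\bar{\mu} = \frac{4}{N_v+N_c}$; the difference lies in how that identity is obtained. The paper derives it metrically: it sums the interior angles $\alpha_{ij}$ of all cells in two ways, getting $2\pi N_v$ by grouping around nodes, and $4\pi + \bar{v}N_c\pi - 2N_c\pi$ by grouping within cells via the spherical area (angle-excess) formula $\area(C_i) = \sum_j \alpha_{ij} - (\deg(C_i)-2)\pi$ and $\sum_i \area(C_i) = 4\pi$. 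You instead observe $\bar{v}N_c = \sum_C e(C) = 2N_e$ and invoke Euler's formula $N_v - N_e + N_c = 2$, which gives the same relation. In effect, the paper's angle computation is a Gauss--Bonnet-style re-derivation of the Euler relation, so it is self-contained and thematically tied to the definition of $\bar{\mu}$ via angle excess; your version is shorter and purely combinatorial, but it silently assumes that Euler's formula applies to the cell decomposition of $\mathbb{S}^2$ induced by the mosaic --- this needs the edge graph to be connected (indeed, to give a proper polyhedral decomposition of the sphere), which in this paper is supplied by Remark~\ref{rem:dual_mosaic} (the mosaic is combinatorially the radial projection of a convex polyhedron, whose graph is $3$-connected and planar by Steinitz's theorem). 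You should cite that remark, or some equivalent fact, to close this small gap. It is worth noting that the paper itself runs exactly your computation ($\sum_i v_i = 2N_e$ plus Euler) a bit later, in Subsection~\ref{ss:noneupolar}, to rewrite $\bar{\mu} = \frac{4}{N_e+2}$, so your argument is fully consistent with the paper's toolkit --- it simply swaps which of the two equivalent identities is taken as primitive.
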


\begin{proof}
Let $N_c$ and $N_v$ denote the numbers of cells and nodes of $\mathcal{M}$, and let $\bar{v}$ and $\bar{n}$ denote the average degree of a cell and a node, respectively. Then the number of adjacent pairs of cells and nodes of $\mathcal{M}$ is equal to
\begin{equation}\label{eq:spherical1}
\bar{v} N_c = \bar{n} N_v.
\end{equation}
Let $\alpha_{ij}$ denote the angle of the cell $C_i$ at the vertex $v_j$ if they are adjacent, and let $\alpha_{ij}=0$ otherwise.
We compute the sum $\sum_{i,j} \alpha_{ij}$ in two different ways.
First, note that $\sum_{i,j} \alpha_{ij}=\sum_j \sum_i \alpha_{ij} = 2 \pi N_v$.
On the other hand, the area of any cell $C_i$ is equal to the angle sufficit of $C_i$, or more specifically, $\area(C_i) = \sum_{j} \alpha_{ij} - (\deg(C_i) - 2) \pi$, where $\deg(C_i)$ is the number of vertices of $C_i$ (see Subsection~\ref{subsec:history}).
Since $\sum_{i} \area(C_i) = \area(\mathbb{S}^2) = 4\pi$ and $\sum_{i} \deg(C_i) = \bar{v} C$, it follows that $\sum_{i,j} \alpha_{ij} = 4 \pi + \bar{v} N_c \pi - 2N_c\pi$.
This implies the equality
\begin{equation}\label{eq:spherical2}
2 N_v = \bar{v} N_c - 2N_c + 4.
\end{equation}
Now, (\ref{eq:spherical}) follows from (\ref{eq:spherical1}), (\ref{eq:spherical2}) and the equation $\bar{\mu} = \frac{4}{N_v+N_c}$
(which follows from Definition \ref{harmonicexcess}).
\end{proof}

\begin{cor}\label{cor:spherical_h}
The harmonic degree of any face-to-face convex mosaic $\mathcal{M}$ of $\mathbb{S}^2$ is
\[
\bar{h}(\mathcal{M}) < 2.
\]
\end{cor}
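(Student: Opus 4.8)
The plan is to read off the corollary directly from Proposition~\ref{prop:spherical_ftf}, whose identity $\bar{h}(\mathcal{M}) = 2 - \bar{\mu}(\mathcal{M})$ reduces the claim to the single assertion that the harmonic angle excess $\bar{\mu}(\mathcal{M})$ is \emph{strictly} positive. So the entire argument amounts to establishing $\bar{\mu}(\mathcal{M}) > 0$, after which the desired inequality $\bar{h}(\mathcal{M}) < 2$ is immediate.

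To see the strict positivity, I would recall the definition $\bar{\mu}(\mathcal{M}) = \frac{1}{\pi} \frac{\Omega_C \Omega_N}{\Omega_C + \Omega_N}$, where $\Omega_C = \frac{4\pi}{N_c}$ and $\Omega_N = \frac{4\pi}{N_v}$ are the average cell areas of $\mathcal{M}$ and of its dual. Since $\mathcal{M}$ is a mosaic of the compact surface $\mathbb{S}^2$, it has finitely many cells and nodes, so $N_c$ and $N_v$ are finite positive integers; hence $\Omega_C, \Omega_N > 0$, the product $\Omega_C \Omega_N$ and the sum $\Omega_C + \Omega_N$ are both strictly positive, and therefore $\bar{\mu}(\mathcal{M}) > 0$. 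Equivalently, I could invoke the identity $\bar{\mu} = \frac{4}{N_v + N_c}$ derived inside the proof of Proposition~\ref{prop:spherical_ftf}, which makes positivity manifest: the denominator $N_v + N_c$ is finite, so the quotient is a strictly positive number. Either route yields $\bar{\mu}(\mathcal{M}) > 0$.

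Combining this with Proposition~\ref{prop:spherical_ftf} gives $\bar{h}(\mathcal{M}) = 2 - \bar{\mu}(\mathcal{M}) < 2$, which is exactly the statement of the corollary.

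There is essentially no serious obstacle here; the only point requiring care is the \emph{strictness} of the inequality, as opposed to the weak bound $\bar{h} \le 2$. This strictness rests entirely on the fact that a spherical mosaic partitions a compact space into finitely many positive-area cells, so that $N_v + N_c$ is finite and $\bar{\mu}$ cannot degenerate to $0$. This is precisely the feature that distinguishes the spherical case from the Euclidean case, where the analogous angle excess vanishes and one recovers $\bar{h} = 2$.
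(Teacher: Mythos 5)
Your proof is correct and follows exactly the paper's route: the corollary is stated as an immediate consequence of Proposition~\ref{prop:spherical_ftf}, with the strict inequality resting on $\bar{\mu}(\mathcal{M}) = \frac{4}{N_v+N_c} > 0$, which holds because a mosaic of the compact sphere has finitely many cells and nodes. Your identification of strictness as the only point requiring care, and its source in compactness, matches the paper's reasoning precisely.
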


While it does not seem feasible to extend the definition of $\bar{h}$ for mosaics in $\HH^2$ in a straightforward way, the geometric interpretation of this quantity in 
Subsection~\ref{qvary} permits us to find a variant of Corollary~\ref{cor:spherical_h} also in this case.

Let $\mathcal{M}$ be a convex face-to-face mosaic in any of the planes $\Re^2$, $\mathbb{S}^2$ or $\HH^2$.
Let $C$ be a cell of $\mathcal{M}$ with $v$ vertices. Let $p_j$, $j=1,2,\ldots,v$ be the vertices of $C$, and fix an arbitrary point $q \in \inter C$.
Let $L_j$ denote the sideline of $C$ passing through the vertices $p_j$ and $p_{j+1}$, and let $R_j$ denote the ray
starting at $q$ and intersecting $L_j$ in a right angle.
The convexity of $C$ implies that the rays $R_1, R_2, \ldots, R_v$ are in this cyclic order around $q$.
Let $\Omega_E(C,p_j)$ denote the angle of the angular region which is bounded by $R_{j-1} \cup R_j$ and whose interior is disjoint from all the rays $R_{j'}$.
Furthermore, let $\Omega_I(C,p_j)$ denote the interior angle of $C$ at $p_j$.
Now we define the quantity
\[
\bar{\Omega}(C) = \frac{\sum_{j=1}^v ( \Omega_E(C,p_j) +  \Omega_I(C,p_j)  )}{v} = \frac{2 \pi + \Lambda(C)}{v},
\]
where $\Lambda(C)$ is the sum of the interior angles of $C$.

Observe that if $\mathcal{M}$ is a Euclidean mosaic, then the weighted average value of $\bar{\Omega}(C)$, with the weight equal to $v$, over the family of all cells of $\mathcal{M}$ coincides with $\bar{\Omega}$.

Next, assume that $\mathcal{M}$ is a spherical mosaic. Let the cells of $\mathcal{M}$ be $C_i$, $i=1,2,\ldots, N_c$, and let $N_v$ and $N_e$ be the number of nodes and edges of the mosaic, respectively. If the degree of $C_i$ is $v_i$, then $\sum_{i=1}^{N_c} v_i = 2 N_e$, and by Euler's formula, $N_v + N_c = N_e + 2$, yielding $\bar{\mu} = \frac{4}{N_c + N_v} = \frac{4}{N_e+2}$.
Furthermore, for all values of $i$, the area formula for spherical polygons yields that $\Lambda(C_i) = (v_i-2) \pi + \area(C_i)$.
Thus, $\bar{\Omega}(C_i) = \frac{v_i \pi + \area(C_i)}{v_i}$. Since the total area of all cells is $4\pi$, this implies
\[
\bar{\Omega} = \frac{\sum_{i=1}^{N_c} \bar{\Omega}(C_i) v_i }{\sum_{i=1}^{N_c} v_i} = \frac{ \pi \sum_{i=1}^{N_c} v_i + \sum_{i=1}^{N_c} \area(C_i)}{2 N_e} = \frac{N_e \pi + 2\pi}{N_e}
= \frac{2\pi}{2 - \bar{\mu}}.
\]
We have shown that for face-to-face, convex mosaics on $\mathbb{S}^2$, we have $\bar{h} = 2 - \bar{\mu}$, (cf. (\ref{eq:spherical})). Thus, for these mosaics we have $\bar{h}= \frac{2\pi}{\bar{\Omega}}$, extending Theorem~\ref{thm:harmonic} for $2$-dimensional spherical mosaics.

Finally, consider the case that $\mathcal{M}$ is a hyperbolic mosaic. Let $C_i, i=1,2,\ldots$ denote the cells of $\mathcal{M}$, and let $v_i$ denote the degree of $C_i$. As in the spherical case, by the area formula for hyperbolic polygons, we have that $\bar{\Omega}(C_i) = \pi - \frac{\area(C_i)}{v_i} < \pi$ for all values of $i$.
For any nonnegative function $f : \mathbb{N} \to \Re$, we may define the \emph{harmonic degree of $\mathcal{M}$ with respect to $f$} as
\[
\bar{h}_f = \lim_{k \to \infty} \frac{2 \pi \sum_{i=1}^k f(i) v_i}{\sum_{i=1}^k f(i) v_i \bar{\Omega}(C_i) },
\]
where the inequalities $\bar{\Omega}(C_i) < \pi$, $i \in \mathbb{N}$ imply $\bar{h}_f \geq 2$.
Note that since any measure on a countable set is atomic, the above formula exhausts all reasonable possibilities for defining harmonic degree.

\subsection{Non face-to-face and nonconvex mosaics
}\label{ss:nff}

Conjecture \ref{con:1} formulates the hypothesis that the harmonic degree of  $d$-dimensional Euclidean face-to-face mosaics is confined to the range $(d, 2^{d-1}]$.  
In the current subsection we would like to point out that  in case of non face-to-face or nonconvex mosaics this range may be much broader.

First, we deal with non face-to-face mosaics.
According to the convention introduced in  Subsection \ref{subsec:history}, the degree of a node is equal to the number of vertices coinciding at that node,  both for face-to-face and non face-to-face convex mosaics.

In $d=2$ dimensions we already stated that for face-to-face mosaics we have $\bar h=2$ \cite[Theorem 10.1.6]{Schneider}, which is equivalent to
\begin{equation}\label{2D}
\bar n = \frac{2\bar v}{\bar v-2}.
\end{equation}
If we admit non face-to-face mosaics and we sum the internal angles over all cells and also sum the same angles as nodal angles over all nodes then (\ref{2D})
generalizes to
\begin{equation}\label{2D_1}
\bar n = \frac{2\bar v}{\bar v-p-1},
\end{equation}
where $p$ is the proportion of the regular nodes in the family of all nodes, where we call a node \emph{regular} if it is the vertex of every cell it belongs to. As we can see, in 2 dimensions convex mosaics
have two free parameters and they form a compact, 2D subset of the $[\bar n, \bar v]$ symbolic plane as illustrated in Figure \ref{fig:2}.
\begin{figure}[ht]
\begin{center}
\includegraphics[width=0.9\textwidth]{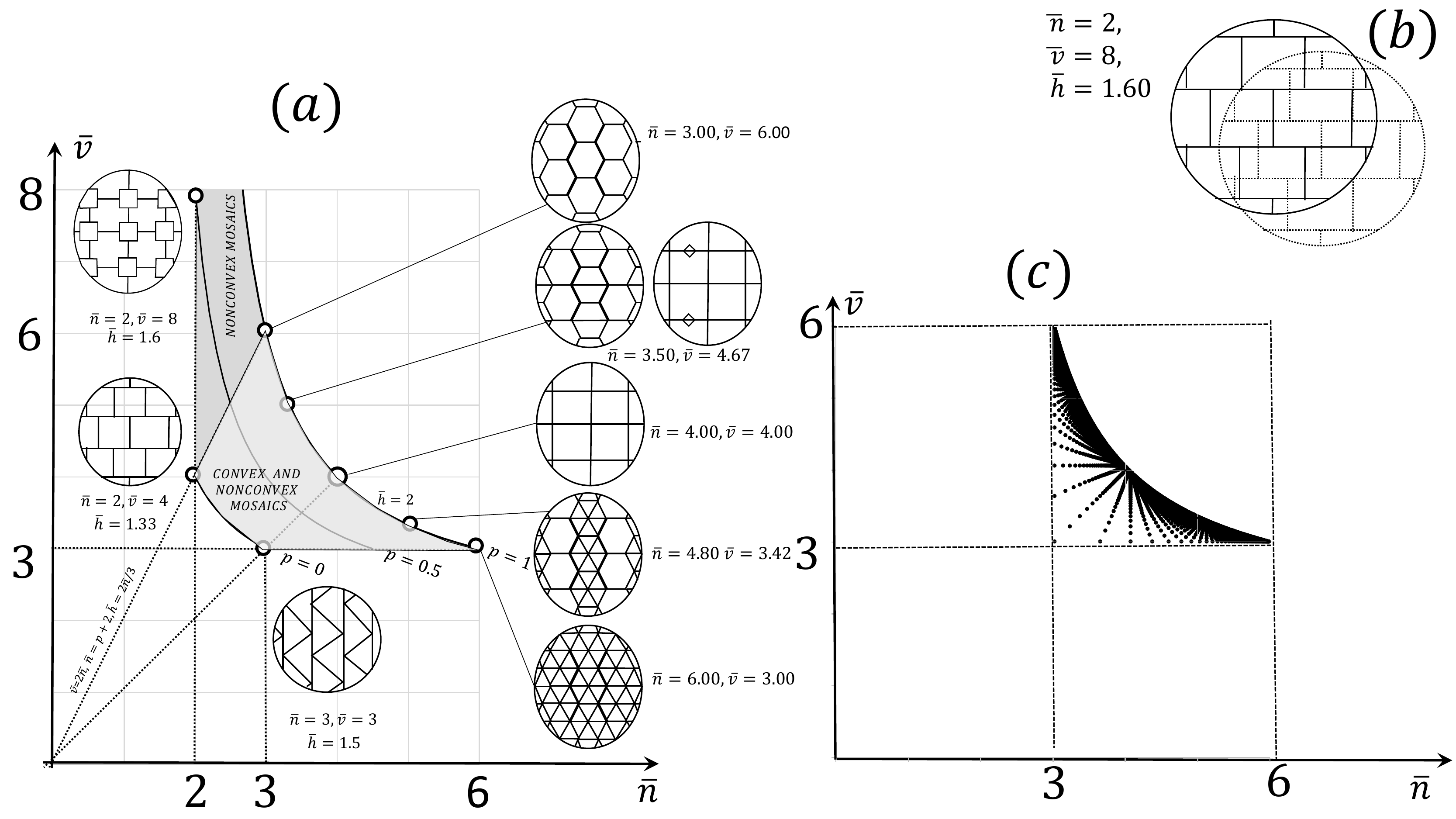}
\caption{(a) Symbolic plane for planar mosaics. The $p=1$ line corresponds to face-to-face mosaics. Grey shaded area marks the descriptors of all admissible mosaics
in the plane. (b) Example of a special 3D mosaic with $\bar h<2$. Solid line: odd layer, dotted line, even layer. Both layers correspond to the planar mosaic in panel (a) at $(\bar n, \bar v)=(2,4)$
. (c)  Parameter plane for spherical mosaics in $d=2$ dimensions. All mosaics shown with $N_c+N_v\leq 200$, $N_c$ denoting the number of cells, $N_v$ denoting the number of nodes. Mosaics on the $\bar v=3$ and $\bar n=3$ lines correspond to simple and simplicial polyhedra, respectively. Observe how mosaics accumulate on the line corresponding to face-to-face Euclidean mosaics.}
\label{fig:2}
\end{center}
\end{figure}
By computing the harmonic degree $\bar h$  over the admissible domain marked on Figure \ref{fig:2} (a) we find that $1.33 \leq \bar h \leq 2$
which indicates that non face-to-face mosaics may admit lower harmonic degrees than face-to-face mosaics.
Figure (\ref{fig:2}) (b) shows an example of a non face-to-face mosaic in $d=3$ constructed as alternated, shifted layers of a brick-wall-type
planar mosaic. At every node just 2 vertices meet so we have $\bar n=2$ and each cell is a cuboid yielding $\bar v=8$. This results in  a value $\bar h=1.6$ which is certainly below the maximal value of $\bar h=2$ for planar mosaics.

\begin{rem}
Using the proof of Proposition \ref{prop:spherical_ftf}, the generalization of formula (\ref{2D_1}) to 2D spherical mosaics is straightforward: 
\begin{equation}\label{eq:spherical_nff}
\bar{v} = \frac{(2-\bar \mu)\bar{n}}{\bar{n}+\bar \mu -1-p}.
\end{equation}
\end{rem}

We finish this subsection with the investigation of nonconvex mosaics in $\Re^d$. First, we note that the definitions and the notation introduced in Section~\ref{sec:intro} can be generalized in a natural way for mosaics in which each cell is a possibly nonconvex polytope homeomorphic to a ball.

Observe that in case of any convex, face-to-face mosaics in $\Re^d$, putting new nodes on existing facets and `breaking up' these facets by slighly moving the new nodes creates nodes of degree $2$. A careful application of this procedure yields a sequence of mosaics  in which the average nodal degree tends to $2$, and the average cell degree tends to $\infty$, resulting in a sequence of mosaics whose harmonic degree tends to $2$.

If we start with the regular cubic mosaic, we may apply the method of the proof of Theorem~\ref{thm:interval} to prove the following.

\begin{rem}
Let $d \geq 2$. For any $h^{\star} \in \left( 2, 2^{d-1} \right]$, there is a normal, face-to-face, possibly nonconvex mosaic $\mathcal{M}$ in $\Re^d$ with harmonic degree $\bar{h}(\mathcal{M}) = h^{\star}$.
\end{rem}

For the nonconvex case, we raise the following variant of Conjecture~\ref{con:1}.

\begin{conj}\label{con:nonconvex}
If $\mathcal{M}$ is a normal, face-to-face, possibly nonconvex mosaic in $\Re^d$, then $\bar{h}(\mathcal{M}) \in \left( 2, 2^{d-1} \right]$.
\end{conj}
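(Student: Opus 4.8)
The plan is to pass to the reciprocal form of the harmonic degree. Since $\frac{1}{\bar h}=\frac{1}{\bar n}+\frac{1}{\bar v}$, the assertion $\bar h(\mathcal M)\in(2,2^{d-1}]$ is equivalent to the two-sided estimate $2^{1-d}\le \frac{1}{\bar n}+\frac{1}{\bar v}<\frac12$, and I would establish the two inequalities by entirely different means (restricting to $d\ge 3$, where the interval is nondegenerate; for $d=2$ the angle-sum argument of Subsection~\ref{ss:eupolar} already forces $\bar h=2$, the degenerate endpoint). The only part of the geometric machinery of Subsection~\ref{ss:eupolar} that survives the loss of convexity is the \emph{internal}-angle identity: around each node $p$ the cells still tile a neighborhood, so $\sum_{C\ni p}\Omega_I(C,p)=S_{d-1}$, and averaging over a large ball gives $\bar\Omega_I\,\bar n=S_{d-1}$, hence $\frac{1}{\bar n}=\frac{\bar\Omega_I}{S_{d-1}}$, with no convexity needed. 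In particular, since each node lies on at least two cells, $\bar n\ge 2$ and thus $\bar\Omega_I\le\tfrac12 S_{d-1}$ automatically. The companion identity $\bar\Omega_E\,\bar v=S_{d-1}$ is the one that breaks, because the outer normal cones of a nonconvex cell no longer tile $\S^{d-1}$; consequently $\bar v$ must be controlled by hand, and this is where the difficulty concentrates.

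For the lower bound $\bar h>2$, i.e. $\frac{\bar\Omega_I}{S_{d-1}}+\frac{1}{\bar v}<\frac12$, the task is to show that the point $(\bar n,\bar v)$ lies \emph{strictly} above the hyperbola $\frac{1}{\bar n}+\frac{1}{\bar v}=\frac12$, which is exactly the planar face-to-face relation and the value realized only in the limit by the facet-breaking construction of Subsection~\ref{ss:nff}. I would triangulate each cell into convex pieces and use that a genuine convex vertex cone is pointed, so each of its internal solid angles is strictly less than $\tfrac12 S_{d-1}$; summing these contributions and feeding in the GHS inequality \cite{GHSch} should bound $\bar\Omega_I$ strictly below $\tfrac12 S_{d-1}-\tfrac{S_{d-1}}{\bar v}$, which is precisely the required strict estimate. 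The delicate point is the bookkeeping: a convex decomposition introduces new vertices and hence changes $\bar v$, so one must show that the gain in the angle estimate is never cancelled by the increase of $\frac{1}{\bar v}$, and that the limiting regime $\bar n\to 2$ (many degree-$2$ nodes, each forcing a reflex vertex whose internal angle is $\ge\tfrac12 S_{d-1}$) drives $\bar v\to\infty$ at the matching rate. I expect the Schönhardt phenomenon — nonconvex cells need not be triangulable without interior Steiner points — to be the chief nuisance, so that the clean count $2v-7$ of Lemma~\ref{lem:tetralas} is unavailable and the estimate must be made robust under the insertion of auxiliary vertices; quantifying the strict deficit is the substantive work.

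For the upper bound $\bar h\le 2^{d-1}$, i.e. $\frac{\bar\Omega_I}{S_{d-1}}+\frac{1}{\bar v}\ge 2^{1-d}$, I must first acknowledge that convex mosaics are a special case of the present class, so this inequality \emph{contains} the upper half of Conjecture~\ref{con:1} and cannot be proved without it. Granting that, the natural route would be a reduction to the convex case: subdivide every nonconvex cell into convex cells to obtain a convex face-to-face mosaic $\mathcal M'$, and track the effect of the subdivision on the pair $(\bar n,\bar v)$; if the subdivision can be organized so that it does not increase $\bar h$, the convex bound would transfer. This is the step I expect to be the genuine obstacle, for two independent reasons: controlling the change of the averages under an arbitrary convex decomposition appears intractable, and even perfect control would only reduce the statement to the still-open Conjecture~\ref{con:1}. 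Accordingly I would regard an unconditional proof of the upper bound as out of reach with present tools, and treat the realistic deliverables as the strict lower bound $\bar h>2$ together with the conditional upper bound modulo Conjecture~\ref{con:1}, obtained by extending Proposition~\ref{prop:h_estimate} to cells that are merely homeomorphic to a ball.
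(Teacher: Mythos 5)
There is no ``paper's own proof'' to compare against here: the statement you attacked is Conjecture~\ref{con:nonconvex}, which the paper leaves open. What the paper does supply is the \emph{opposite} direction, in the remark immediately preceding the conjecture: by starting from the cubic mosaic, re-running the strip construction from the proof of Theorem~\ref{thm:interval}, and using the facet-breaking device of Subsection~\ref{ss:nff} that creates degree-$2$ nodes, every value in $\left(2,2^{d-1}\right]$ is \emph{realized} by some nonconvex face-to-face mosaic, with $2$ approached only in the limit. Your proposal correctly targets the actual content of the conjecture (the prohibition of values outside the interval), and your structural analysis is sound: the reciprocal reformulation is right; the observation that the internal-angle identity $\bar\Omega_I\,\bar n=S_{d-1}$ survives the loss of convexity (cells still tile a neighborhood of each node) while the polar identity $\bar\Omega_E\,\bar v=S_{d-1}$ does not is exactly the right diagnosis; and your remark that the upper bound subsumes the still-open Conjecture~\ref{con:1} is correct, so no unconditional proof of that half is possible by any route that does not also settle the convex case.

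As a proof, however, the proposal has genuine gaps at every load-bearing step, most of which you flag yourself. The lower bound $\bar h>2$ is not established: the assertion that a convex decomposition plus the Gao--Hug--Schneider inequality \cite{GHSch} yields $\bar\Omega_I<\tfrac12 S_{d-1}-S_{d-1}/\bar v$ is not an argument --- that inequality compares a spherically convex set with its polar, and at a reflex vertex there is no polar pair to compare; the required compensation (each degree-$2$ node contributing a reflex internal angle $\geq\tfrac12 S_{d-1}$ being offset by a matching growth of $\bar v$) is precisely the open difficulty, not a step that can be deferred; and the bookkeeping problem you mention (decompositions change $\bar v$, and Sch\"onhardt-type cells force Steiner points, so Lemma~\ref{lem:tetralas} is unavailable) is left unresolved, as is the proposed extension of Proposition~\ref{prop:h_estimate} to nonconvex cells. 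One further point worth recording: your own $d=2$ observation --- the polygon angle-sum formula forces $\bar h=2$ for \emph{any} face-to-face planar mosaic, convex or not --- actually shows that the conjecture as literally stated fails for $d=2$, since $\left(2,2^{d-1}\right]=(2,2]$ is empty while such mosaics exist; so your restriction to $d\geq 3$ is a necessity, not a convenience, and amounts to a small correction of the conjecture. In sum, you have mapped the problem and its obstructions accurately, but no part of the conjecture is actually proved, conditionally or otherwise --- consistent with its status in the paper as open.
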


\section{Summary}\label{s:sum}

In this paper we proposed to represent mosaics in the  $[\bar n, \bar v]$ symbolic plane of average nodal and cell degrees and we introduced the harmonic degree $\bar h$, constant values of which appear as curves in this space. We pointed out that these curves appear to have special significance: in $d=2$ dimensions all convex, face-to-face mosaics appear as points of the $h=2$ curve and a compact domain can be associated to non face-to-face mosaics.  We showed that in case of 2D spherical mosaics $\bar h$ differs only in a constant from the suitably averaged angle excess and this explains why points associated with 2D regular mosaics on manifolds with constant curvature are separated by the $\bar h=2$ line.

The most interesting geometric interpretation of  $\bar h$ appears to be  Theorem \ref{thm:harmonic}, stating that the harmonic degree is the inverse of the averaged sum of two angles associated with polar domains, one of which is the internal  angle of a cell at a vertex of the cell. We showed that this interpretation of $\bar h$ remains valid for Euclidean mosaics in arbitrary dimensions as well as 2D  spherical mosaics. The link established
in Theorem \ref{thm:harmonic} between the harmonic degree $\bar h$ and the average total angle $\bar \Omega$ illustrates that the combinatorial and metric properties of convex mosaics are closely related.
While $\Omega$ is constant in 2D (resulting in $\bar \Omega=\pi, \bar h=2$), in 3D 
there exists a broad range in which $\Omega$ may fluctuate. Nevertheless, we found that for a set of 60 mosaics (which included all uniform honeycombs as well as random mosaics) the actual fluctuation is very small and the harmonic degree of all investigated mosaics was in the range $3.3 \leq \bar h \leq 4$.
By using a recursive algorithm we also constructed $d$-dimensional Euclidean mosaics which approach, as the number of recursive steps tends to infinity,  the harmonic degree $\bar h = d$. We proved that $\bar h$ may assume any value in the interval $(d, 2^{d-1}]$. 

All the above computations and results led us to formulate Conjecture \ref{con:1}, stating that there exist no Euclidean, normal, face-to-face mosaics the harmonic degree of which lies outside the $(d, 2^{d-1}]$ interval. If true, this conjecture would not only yield an interesting alternative explanation for the averaged behavior of 1D and 2D mosaics but also deepen our current understanding of
3D (and higher dimensional) honeycombs.

\pagebreak 

\section*{Appendix}

\begin{table} [htp] 
{\tiny
\begin{tabular}{| c |r||c|c|c|r|}
\hline
ID.  & Name of mosaic & $\bar n$ & $\bar v$ & $\bar f$ & $\bar h$  \\
\hline
\hline
 1 & cubic    & 8 & 8 & 6 & 4.00		 \\
\hline
2 & rectified cubic	& 6 & 	9	& 11	& 3.60	 \\
\hline
3 & truncated cubic &	5	& 15	& 11	& 3.75	 \\
\hline
4 & cantellated cubic & 	5	& 12 & 	11.6 &	3.53	 \\
\hline
5 & cantitruncated cubic	& 4	& 19$\frac{1}{5}$&	11$\frac{3}{5}$ &	3.31	 \\
\hline
6 & runcitruncated cubic	& 5	 & 15	& 11	& 	3.75	\\
\hline
7 & alternated cubic	 &  14	 & 4$\frac{2}{3}$ &	5$\frac{1}{3}$	& 3.50		 \\
\hline
8 & cantic cubic	 & 5	& 15 & 	11	&  3.75		 \\
\hline
9 & runcic cubic	& 5	& 10	& 10	& 3.33	 \\
\hline
10	 & runcicantic cubic &	4	& 24	& 14	& 3.43	 \\
\hline
11	 & bitruncated cubic	 & 4	& 24	& 14	& 3.43		 \\
\hline
12	& omnitruncated cubic & 	4	& 24	 & 14	& 3.43	\\
\hline
13	& quarter cubic & 	8	& 8	& 6	& 4.00		 \\
\hline
14	& truncated/bitruncated square prismatic &	6	& 12	& 8	& 4.00	 \\
\hline
15	& snub square prismatic & 	10	& 6$\frac{2}{3}$ &	5$\frac{1}{3}$ &	4.00	 \\
\hline
16	& triangular prismatic	& 12	& 6	& 5	&  4.00	\\
\hline
17	& hexagonal prismatic	& 6	& 12	& 8	& 4.00		 \\
\hline
18	& trihexagonal prismatic &	8	& 8	& 6 & 	4.00	 \\
\hline
19	& truncated hexagonal prismatic	& 6	& 12 &	8	&  4.00	 \\
\hline
20	& rhombi-hexagonal prismatic	& 8	& 8	& 6	&  4.00		 \\
\hline
21		& snub-hexagonal prismatic	& 	10	& 	6$\frac{2}{3}$	& 	5$\frac{1}{3}$		& 4.00	 \\
\hline
22		& truncated trihexagonal prismatic		& 6		& 12		& 8		&  4.00	 \\
\hline
23		& elongated triangular prismatic		& 10		& 6$\frac{2}{3}$	& 5$\frac{1}{3}$	& 4.00		 \\
\hline
24		& gyrated alternated cubic		& 14		& 4$\frac{2}{3}$	& 5$\frac{1}{3}$		&  3.50	 \\
\hline
25		& gyroelongated alternated cubic		& 13		& 5$\frac{1}{5}$	& 5$\frac{1}{5}$		& 3.71	 \\
\hline
26		& elongated alternated cubic		& 13		& 5$\frac{1}{5}$		& 5$\frac{1}{5}$ &  3.71	 \\
\hline
27		& gyrated triangular prismatic		& 12		& 6		& 5		&  4.00	 \\
\hline
28		& gyroelongated triangular prismatic		& 10		& 6$\frac{2}{3}$		& 5$\frac{1}{3}$		& 4.00		 \\
\hline
29 & 	Poisson-Voronoi	& 4	& 27.07 &	15.51 &	 3.49	 \\
\hline
30	& Hyperplane &	8	& 8	& 6	&  4.00		 \\
\hline
 31 & iterated foam    & 4 & 12 & 8 & 3.00		 \\
\hline
1'	& dual of cubic	& 8	& 8	& 6	& 4.00		 \\
\hline
2'	& dual of rectified cubic &	9 &	6	& 8	&  3.60		 \\
\hline
3'	& dual of truncated cubic	& 15	& 5	& 5	&  3.75		 \\
\hline
4'	& dual of cantellated cubic	& 12	& 5	& 6	& 3.53		 \\
\hline
5'	& dual of cantitruncated cubic	& 19$\frac{1}{5}$	& 4 & 	4	&  3.31		 \\
\hline
6'	& dual of runcitruncated cubic	& 15	& 5 &	5	&  3.75	 \\
\hline
7'	& dual of alternated cubic	& 4$\frac{2}{3}$	& 14 &	12	&3.50		 \\
\hline
8'	& dual of cantic cubic &	15	& 5	& 5 &	  3.75	 \\
\hline
9' & 	dual of runcic cubic	& 10	& 5	& 6	& 3.33	 \\
\hline
10'	& dual of runcicantic cubic &	24	& 4	& 4	& 3.43	 \\
\hline
11'	& dual of bitruncated cubic	& 24	& 4	& 4	& 3.43		 \\
\hline
12'	& dual of omnitruncated cubic	& 24	& 4	& 4	& 3.43		 \\
\hline
13'	& dual of quarter cubic	& 8	& 8	& 6	& 4.00		 \\
\hline
14'	& dual of truncated/bitruncated square prismatic & 	12	& 6 & 5 &   4.00		 \\
\hline
15'	& dual of snub square prismatic	& 6$\frac{2}{3}$ & 	10 &	7	& 4.00	 \\
\hline
16'	& dual of triangular prismatic &	6 &	12	& 8	& 4.00		 \\
\hline
17'	& dual of hexagonal prismatic &	12	& 6	& 5 & 4.00	 \\
\hline
18'	& dual of trihexagonal prismatic &	8	& 8	& 6	& 4.00		 \\
\hline
19'	& dual of truncated hexagonal prismatic & 	12	& 6	& 5	& 4.00		 \\
\hline
20'	& dual of rhombi-hexagonal prismatic &	8	& 8	& 6	& 4.00		 \\
\hline
21'	& dual of snub-hexagonal prismatic & 	6$\frac{2}{3}$ &	10	& 7	& 4.00		 \\
\hline
22'	& dual of truncated trihexagonal prismatic &	12	& 6 & 5 &	 4.00		 \\
\hline
23'	& dual of elongated triangular prismatic &	6$\frac{2}{3}$	& 10	& 7	& 4.00		 \\
\hline
24'	& dual of gyrated alternated cubic &	4$\frac{2}{3}$ &	14 &	12 & 3.50		 \\
\hline
25'	& dual of gyroelongated alternated cubic &	5$\frac{1}{5}$ &	13 &	10	&  3.71	 \\
\hline
26'	& dual of elongated alternated cubic &	5$\frac{1}{5}$ &	13	& 10	& 3.71	 \\
\hline
27'	& dual of gyrated triangular prismatic & 	6	& 12 &	8 &	 4.00		 \\
\hline
28'	& dual of gyroelongated triangular prismatic	& 6$\frac{2}{3}$ &	10 &	7	& 4.00	 \\
\hline
29'	& Dual of Poisson-Voronoi: Poisson-Delaunay &	27.07 &	4	& 4	&3.49		 \\
\hline
30'	& Dual of Hyperplane: Hyperplane &	8 &	8	& 6	&  4.00		 \\
\hline
31' & dual of iterated foam	& 12 & 	4	& 4	& 3.00	 \\
\hline 
\end{tabular}
}
\normalsize
\vspace{0.5cm}
\caption{Harmonic degree $\bar h$ of uniform convex honeycombs, their duals, Poisson-Voronoi, Poisson-Delaunay and Hyperplane random mosaics, iterated foams and their duals.}
\label{tab:1}
\end{table}

\pagebreak

\end{document}